\theoremstyle{plain}
\newtheorem{theorem}{Theorem}[section]
\newtheorem{lemma}[theorem]{Lemma}
\newtheorem{proposition}[theorem]{Proposition}
\theoremstyle{definition}
\numberwithin{equation}{section}
\newtheorem*{theorem*}{Theorem} 
\newcommand{\Z}{{\mathbb Z}}
\newcommand{\R}{{\mathbb R}}
\newcommand{\C}{{\mathcal C}}
\newcommand{\T}{{\mathcal T}}
\newcommand{\Sp}{{\mathcal S}}
\newcommand{\I}{\mathcal{I}}
\def\det{\operatorname{det}}
\def\wt{\widetilde}
\DeclareFontFamily{U}{mathx}{\hyphenchar\font45}
\DeclareFontShape{U}{mathx}{m}{n}{
<5> <6> <7> <8> <9> <10>
<10.95> <12> <14.4> <17.28> <20.74> <24.88>
mathx10
}{}
\DeclareSymbolFont{mathx}{U}{mathx}{m}{n}
\DeclareMathAccent{\widecheck}{0}{mathx}{"71}
\title{Local bounds for singular Brascamp-Lieb forms with cubical structure}
\author[P. Durcik]{Polona Durcik}
\address{Schmid College of Science and Technology, Chapman University, One University Drive, Orange, CA 92866, USA}
\email{durcik@chapman.edu}
\author[L. Slav\'ikov\'a]{Lenka Slav\'ikov\'a}
\address{Department of Mathematical Analysis, Faculty of Mathematics and Physics, Charles University, Sokolovsk\'a 83, 186 75 Praha 8, Czech Republic}
\email{slavikova@karlin.mff.cuni.cz}
\author[C. Thiele]{Christoph Thiele}
\address{Mathematisches Institut, Universit\"at Bonn, Endenicher Allee 60, 53115 Bonn, Germany}
 \email{thiele@math.uni-bonn.de}
\date{\today}
\begin{document}

\subjclass[2010]{42B20}

\begin{abstract}
We prove a range of $L^p$ bounds
for singular Brascamp-Lieb forms with cubical structure. We pass through sparse and local bounds, the latter
proved by an iteration of Fourier expansion, telescoping, and the Cauchy-Schwarz inequality. We allow $2^{m-1}<p\le \infty$ with $m$ the dimension of the cube, extending an earlier result that required $p=2^m$.
The threshold $2^{m-1}$ is sharp in our theorems.
\end{abstract}

\maketitle

\section{Introduction}
 
A singular Brascamp-Lieb form with cubical structure is a principal value integral
\begin{equation}\label{def:sblc}
p.v. \int_{\R^{2m}} \prod_{j\in \C} F_j(\Pi_jx ) K(\Pi x) \, dx .
\end{equation}
It is called "Brascamp-Lieb", because the integrand is a product of several functions $F_j$ and $K$, each
composed with a linear surjection $\Pi_j$ or $\Pi$.  It is called singular, because $K$ is a Calder\'on-Zygmund kernel.

The cubical structure of \eqref{def:sblc} lies in the choice of surjections. We let $\C$ be the set of functions $$j:\{1,\dots, m\}\to \{0,1\} $$
and $\Pi_j$ the projection from $\R^{2m}$ to $\R^m$ given by
\begin{equation}\label{def:Pij}
\Pi_j(x^0_1,\dots, x^0_m,x^1_1,\dots ,x^1_m)^T:=(x_1^{j(1)},\dots, x_m^{j(m)})^T ,
\end{equation}
while the surjection $\Pi:\R^{2m}\to \R^m$ is generic. 
Each index $j$ can naturally be identified with the corner of an $m$-dimensional cube $[0,1]^m$. 
The cubical structure allows for a symmetrization procedure, pioneered in 
\cite{K12:tp}, of the tuple of functions $F_j$ along reflection symmetries of the cube. This is crucial for our understanding of $L^p$ estimates for these forms.
The paper \cite{KTZ-K} provides evidence that singular Brascamp-Lieb forms with cubical structure are key to understanding more general singular Brascamp-Lieb
forms. Singular Brascamp-Lieb forms with cubical structure have found applications in enumerative combinatorics \cite{DK20}, \cite{DKR18}, \cite{K20}
and ergodic theory \cite{DKST19}, \cite{KS20}.

We fix $m\ge 2$ throughout this paper and always denote by $\C$ the cube and by $\Pi_j$ the projections as in \eqref{def:Pij}. We also fix the notation that components of vectors in 
$\R^{2m}$ are written $x^l_i$ as in \eqref{def:Pij} and we use matrix product notation for linear maps as in \eqref{def:Pij}.

We assume in this paper, unless otherwise stated, that each $F_j$ is a real-valued smooth compactly supported function on $\R^m$. The principal value in \eqref{def:sblc} means that the Schwartz function 
$$\phi=\prod_{j\in \C} F_j\circ \Pi_j$$ is paired with the tempered distribution $K\circ \Pi$.
A Calder\'on-Zygmund kernel $K$ of degree $n$ in $\R^m$ is the Fourier transform of integration against a function $\widehat{K}$ on
$\R^m\setminus \{0\}$  which satisfies the symbol estimates \begin{equation*} 
|\partial^\alpha \widehat{K}(\eta)|\le |\eta|^{-|\alpha|}
\end{equation*}
for all multi-indices of order $|\alpha|\le n$.
Hence the principal value integral can be written as a standard integral 
$$p.v. \int _{\R^{2m}} \phi(x) K(\Pi x)\,dx =
\int_{\R^m} \widehat{\phi}(-\Pi^T \eta)  \widehat{K}(\eta) \, d\eta .$$

An $m\times m$ matrix $A$ is called $\Delta$-regular, if
$$\max(|A|,|\det A|^{-1})\le \Delta,$$
where $|A|$ denotes the operator norm of $A$. Note that one can also control $|\det A|$ and $|A^{-1}|$
of a $\Delta$-regular matrix by suitable $\Delta'$ depending on $\Delta$.

The main purpose of this paper is to prove the following theorem, which generalizes a theorem in \cite{DT20}.
\begin{theorem}
\label{mainp}
 Let $\Delta>1$. For each $j\in \C$ let
 \begin{equation}\label{E:prange}
    2^{m-1}<p_j\le \infty
\end{equation} 
and assume
\begin{equation}\label{holder}
    \sum_{j\in \C} \frac 1{p_j}=1.
\end{equation}
Then there is a constant $C$ such that the following holds.

 Let $\Pi$ be a real $m\times 2m$ matrix such that for each $j\in \C$ the composition $\Pi_j\Pi^T$ is $\Delta$-regular. 
For all
Calder\'on-Zygmund kernels $K$ in $\R^m$ of order $2^{6m}$  and all tuples $(F_j)_{j\in \C}$   we have 
$$\Big|p.v. \int_{\R^{2m}} \prod_{j\in \C} F_j(\Pi_jx ) K(\Pi x) \, dx \Big|\le C\prod_{j\in \C} \|F_j\|_{p_j}.$$
\end{theorem}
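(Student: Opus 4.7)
The plan is to deduce the claimed $L^{p_j}$ bound from a sparse domination, which in turn follows from a single-scale local inequality, proved by an inductive procedure built from Fourier expansion, telescoping, and Cauchy--Schwarz, as the abstract suggests. As a preliminary reduction, a linear change of variables exploiting the $\Delta$-regularity of each $\Pi_j\Pi^T$ normalizes the geometry (so one may essentially take $\Pi x=(x^0_1-x^1_1,\dots,x^0_m-x^1_m)$), and a dyadic Littlewood--Paley decomposition $\widehat K=\sum_k \widehat{K}_k$ reduces matters to a single-scale estimate that is uniform in scale. The sum over scales will then converge thanks to the high regularity $2^{6m}$ imposed on $K$.

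The heart of the argument is the local single-scale bound, of the shape
$$\Big|\int_{\R^{2m}} \prod_{j\in \C} F_j(\Pi_j x)\,K_0(\Pi x)\,dx\Big| \le C \prod_{j\in \C} \Big(|Q_j|^{-1}\int_{Q_j}|F_j|^{p_j}\Big)^{1/p_j},$$
with cubes $Q_j\subset\R^m$ comparable to the support scale of $K_0$. Following the cubical symmetrization introduced in \cite{K12:tp} and exploited in \cite{DT20}, I would proceed by induction on the coordinate directions $i=1,\dots,m$. At each step, I apply Cauchy--Schwarz in the pair $(x^0_i,x^1_i)$, pairing each corner $j$ with its reflection $j\oplus e_i$, which dominates the form by the geometric mean of two copies in which the kernel has been replaced by its $i$-th reflection-symmetrized average. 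The symmetrized kernel is then telescoped across scales, and the single-scale increment is expanded in a Fourier series on a cube; the Calder\'on--Zygmund symbol estimates furnish the cancellation needed to control the oscillatory Fourier modes, while the slowly varying modes feed into the next iteration. After $m$ iterations each $F_j$ appears only paired against a localized bump, and H\"older's inequality closes the estimate with local $L^{p_j}$ norms.

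The main obstacle is precisely the passage from the balanced case $p_j=2^m$ of \cite{DT20}, where each Cauchy--Schwarz step is applied symmetrically to all $F_j$ and distributes $L^2$ control summing to $L^{2^m}$, to the asymmetric range \eqref{E:prange}. For $p_j$ close to $2^{m-1}$ (respectively $p_j=\infty$) one needs to \emph{stop short} on $F_j$, performing only a few Cauchy--Schwarz steps involving $F_j$ and then controlling it in $L^{p_j}$ (respectively $L^\infty$) rather than continuing to symmetrize. The delicate bookkeeping lies in propagating $L^{p_j}$ norms, instead of $L^2$ norms, through the telescoping/Fourier-expansion step, and in verifying that the Cauchy--Schwarz iteration still closes when different coordinates are truncated at different depths. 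The threshold $p_j>2^{m-1}$ is sharp because each function must participate in at least one Cauchy--Schwarz step in order for the reflection-symmetrized kernel to yield cancellation; below the threshold, at least one $F_j$ would have to be estimated without any symmetrization, and the telescoping step would fail to produce summable scale contributions.

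Once the local bound holds with the correct exponents, a standard recursive Calder\'on--Zygmund stopping-time argument --- iteratively extracting exceptional cubes where some $\langle|F_j|^{p_j}\rangle^{1/p_j}$ is large and applying the local bound on the complement --- produces a sparse collection $\s S$ dominating the form by $\sum_{Q\in\s S}|Q|\prod_{j\in\C}\langle|F_j|^{p_j}\rangle_Q^{1/p_j}$, from which Theorem \ref{mainp} follows by the now-standard sparse-to-$L^p$ argument.
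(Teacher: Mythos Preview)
Your overall architecture---prove a local bound, upgrade to sparse, conclude $L^{p_j}$---matches the paper's, but the heart of your proposal has a real gap. You propose a local bound with $L^{p_j}$-averages and suggest achieving it by ``stopping short'' on a given $F_j$, performing fewer Cauchy--Schwarz steps when $p_j$ is smaller. This cannot work in the cubical structure: each Cauchy--Schwarz step is taken in a coordinate direction $i$ and acts on \emph{all} $2^m$ functions simultaneously, pairing every $F_j$ with $F_{i\ast j}$ (see \eqref{beforecs}--\eqref{aftercs0}). There is no mechanism for applying a different number of steps to different corners $j$. Likewise, the idea of ``propagating $L^{p_j}$ norms through the telescoping step'' is not available: the telescoping terminates in a Brascamp--Lieb integral (Lemma~\ref{l:brascamplieb}) which forces $L^{2^{m-1}}$ on every factor, not $L^{p_j}$.

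The paper avoids this entirely. The local bound (Theorems~\ref{maint} and \ref{mainx}, Proposition~\ref{p:induction}) is \emph{symmetric in the functions}: it controls the form over a convex tree by $|Q_\T|\prod_{j} M_\alpha(F_j)(A_j\T)$, where $M_\alpha$ is a mollified strong maximal function built from $L^{2^{m-1}}$ averages, independent of the target exponents $p_j$. The exponent $2^{m-1}$ is dictated by the cubical Brascamp--Lieb inequality and by the need for a weak-type $(2^{m-1},2^{m-1})$ bound on $M_\alpha$ (Lemma~\ref{L:endpoint_bound}) in the stopping-time construction. The passage to arbitrary $p_j>2^{m-1}$ happens \emph{only at the final step}, after the sparse domination $\sum_{Q\in\Sp}|Q|\prod_j[F_j\circ A_j]_{Q^*}$ (with $2^{m-1}$-averages, not $p_j$-averages) is in hand; one then quotes a standard sparse-form estimate (\cite{Z-K19}). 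A second difficulty you do not address is that the generic surjection $\Pi$ forces a cone decomposition of the symbol (display~\eqref{decrsg}) producing rectangles of arbitrary eccentricity; controlling these is precisely why the nonstandard maximal object $M_\alpha$ with $0<\alpha<1/m$, rather than the Hardy--Littlewood maximal function, appears throughout. Finally, your ``single-scale'' formulation is too coarse: the local bound must be over a convex tree, since the telescoping and the cancellation of $K$ only become effective when summing scales.
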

The result in \cite{DT20} is the special case $p_j=2^m$ for all $j\in \C$.  There the necessity of regularity of $\Pi_j\Pi^T$ has been discussed in detail. The exponent $2^m$ is very particular and allows for a global argument in \cite{DT20}. 
Any deviation from this exponent requires a localization of the arguments. 
The main point of the present paper is to develop such localization for these singular Brascamp-Lieb forms.
A key difficulty arises from the general position of $\Pi$, which prevents the use of algebraic telescoping identities and
instead requires cone decomposition \eqref{decrsg} of certain multipliers.
This leads to rectangles of arbitrary eccentricity, expressed via matrices $D$ or $E$, and 
correspondingly a use of  a mollified strong maximal function for example in  \eqref{e:refquestion},
\eqref{e:csum1}
and in 
\eqref{e:teles7}.
The localization procedure also
leads to boundary terms associated with arbitrary eccentricities, which require some additive measure theoretic arguments
in \eqref{e:set1} and subsequent displays. 
The use of mollified strong maximal function,
a superposition of anisotropically rescaled 
Hardy-Littlewood maximal functions
as seen in Lemma \ref{L:endpoint_bound}, and additive measure theory in the context of singular Brascamp-Lieb inequalities seem a
novelty in our paper.

Theorem \ref{mainp} should be compared with \cite{D15}, which proves  a special case with $m=2$ of the theorem, and with
\cite{St19}
in the simpler dyadic setting. 
Both of these references avoid general surjections $\Pi$.

On a formal level, we prove Theorem~\ref{mainp} with the technique of sparse bounds, which has seen
prominent developments in the past decade and is a vehicle towards 
further results such as weighted and vector valued inequalities. Concretely, Theorem \ref{mainp} follows from the sparse  bounds formulated in the next Theorem \ref{mains}. 

A dyadic interval is one of the form 
$[2^k n, 2^k(n+1))$ with integers $k,n$. A dyadic cube $Q$ in $\R^m$ is of the form $I_1\times \cdots \times I_m$, where the $I_i$ are dyadic intervals of the same length. We denote by $Q^*$ the cube with the same center as $Q$ and three times the side length.
A sparse ($\frac 12$-sparse) collection $\Sp$ of dyadic cubes is one that satisfies for all $Q\in \Sp$
$$\sum_{Q'\in \Sp: Q'\subseteq Q}|Q'|\le  2 |Q|,$$
with $|Q|$ denoting the volume of $Q$. 
For a measurable set $S \subseteq \R^m$, typically a cube or parallelepiped, and an integrable function $F$ on $\R^m$, we denote 
$$
[F]_S:=\Big(\frac{1}{|S|} \int_{S} |F|^{2^{m-1}}\Big)^{2^{1-m}}.
$$

For a $2m\times m$ matrix $J$ we denote the
upper $m\times m$ block by $J^0$ and the lower $m\times m$ block by $J^1$.
Assuming that $J^1$ is regular, we write
$$\Pi^J:=(I,-J^0(J^1)^{-1}),$$
which implies $\Pi^J\circ J=0$.
We will see in Section \ref{S:maintheorem} that all maps $\Pi$ in Theorem~\ref{mainp} may and will be assumed to be of the form $\Pi^J$. 

\begin{theorem}\label{mains}
 Let $\Delta>1$. 
For each $j\in \C$, let $p_j$ be such that 
\eqref{E:prange} and \eqref{holder} hold.
Then there is a constant $C$ such that the following holds.

Let $J$ be a real $2m\times m$ matrix such that for each $j\in \C$ the matrix 
\begin{equation*}
    A_j:=\Pi_jJ
\end{equation*}
is $\Delta$-regular.
For all
Calder\'on-Zygmund kernels $K$ in $\R^m$ of order $2^{6m}$  and all tuples $(F_j)_{j\in \C}$   there is a
sparse  collection $\Sp$ of dyadic cubes in $\R^m$ such that
\begin{equation}\label{E:sparse}
\Big| p.v. \int_{\R^{2m}} \Big( \prod_{j\in \C} F_j(\Pi_j x)\Big) K(\Pi^J x)\,dx\Big|\leq C \sum_{Q\in \Sp} |Q| \prod_{j\in \C} [F_j\circ A_j ]_{Q^*}.
\end{equation}
\end{theorem}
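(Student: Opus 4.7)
The proof of Theorem \ref{mains} naturally splits into a \emph{local bound} at one scale and a standard \emph{sparse extraction} iteration. The plan is to first establish, for any dyadic cube $Q\subseteq\R^m$, a single-scale estimate controlling the truncation of the form to $Q$ by $C|Q|\prod_{j\in\C}[F_j\circ A_j]_{Q^*}$, uniformly in smooth cutoffs that localize each $F_j\circ A_j$ to a bounded enlargement of $Q$. Granted such a local bound, the sparse collection $\Sp$ is produced by the usual Calder\'on--Zygmund stopping-time argument: at each step one discards the maximal dyadic sub-cubes $Q'\subseteq Q$ on which some $[F_j\circ A_j]_{Q'^*}$ exceeds a large multiple of $[F_j\circ A_j]_{Q^*}$. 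The $L^{p_j/2^{m-1}}$ boundedness of the dyadic maximal function applied to $|F_j\circ A_j|^{2^{m-1}}$, which is precisely where the hypothesis $p_j>2^{m-1}$ enters, ensures that these bad cubes occupy at most half of $|Q|$, and iteration produces a sparse family satisfying \eqref{E:sparse}.

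The real content lies in the local bound, which I would build on the global $L^{2^m}$ argument of \cite{DT20}. Its engine is an iteration that combines a Fourier expansion of $\widehat{K}$ into elementary wave packets, telescoping identities exploiting the reflection symmetries of the cube $\C$, and Cauchy--Schwarz steps that halve the number of ``free'' functions in each round. Because the surjection $\Pi^J$ is in general position, the algebraic telescoping of \cite{DT20} cannot be performed directly against axis-aligned dyadic cubes. I would therefore first apply the cone decomposition of $\widehat{K}$ indicated in \eqref{decrsg}, producing multiplier pieces whose preferred anisotropic rectangular geometries are recorded by matrices $D$ and $E$. On each cone piece, the iterative scheme of \cite{DT20} can be run, and the contributions are afterwards recombined against the cone indexing, using the Calder\'on--Zygmund regularity of $K$ of order $2^{6m}$ to guarantee absolute convergence.

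The principal obstacle is the control of the boundary terms introduced by localization at each step of this iteration. They come in two flavours. First, the anisotropic translations involved in telescoping do not respect the fixed axis-parallel cube $Q^*$, so commutator errors arise; I would bound them by anisotropic mollified Hardy--Littlewood maximal functions and dominate the superposition over the cones by a strong maximal function using Lemma \ref{L:endpoint_bound}. Second, when telescoping rectangles of large eccentricity protrude from $Q^*$, pointwise $L^\infty$ control of the cutoffs is too crude; here I would substitute the additive measure-theoretic argument indicated around \eqref{e:set1}, trading pointwise bounds for $L^{2^{m-1}}$ averages compatible with $[\cdot]_{Q^*}$ on the right-hand side of \eqref{E:sparse}. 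The hardest part will be coordinating these two mechanisms uniformly in the eccentricity, since the cones and the axis-parallel cube $Q$ can be wildly mismatched: this is exactly where the threshold $p_j>2^{m-1}$ turns out to be sharp, as any smaller exponent would prevent the anisotropic maximal functions from absorbing the boundary errors with a constant summable over the cone decomposition.
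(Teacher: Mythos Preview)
Your high-level plan (local tree bound plus a stopping-time iteration) matches the paper's, and your description of the cone decomposition, Cauchy--Schwarz iteration, and the appearance of anisotropic rectangles is roughly accurate. But there is a genuine structural gap in how you coordinate the local bound with the stopping criterion.

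You propose a local bound of the form $|\Lambda_{\T_Q}((F_j)_j)|\le C|Q|\prod_j[F_j\circ A_j]_{Q^*}$ and a stopping time that selects cubes where some $[F_j\circ A_j]_{(Q')^*}$ is large relative to $[F_j\circ A_j]_{Q^*}$. This pair does not close. The inductive proof of the local bound (what the paper formulates as Proposition~\ref{p:induction}) unavoidably produces, at the telescoping step, averages of $F_j$ over \emph{anisotropic} rectangles whose eccentricity is governed by the diagonal matrices $D$, $E$ from the cone decomposition; these are bounded not by cube averages $[F_j\circ A_j]_{(Q')^*}$ but by the mollified strong maximal function $M_\alpha(F_j)(A_j\T)$, with an eccentricity weight that is essential for summing over cones. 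So the local bound one can actually prove is Theorem~\ref{maint}, with $M_\alpha$ on the right, not $[F_j\circ A_j]_{Q^*}$. Consequently the stopping time must also be run with respect to $M_\alpha(F_j1_{A_jQ^*})$ exceeding $c[F_j\circ A_j]_{Q^*}$, and this is exactly where Lemma~\ref{L:endpoint_bound} (the weak-type $2^{m-1}$ bound for $M_\alpha$) is used: to show the stopped cubes occupy at most half of $|Q|$. You have placed Lemma~\ref{L:endpoint_bound} inside the local-bound argument, but it belongs in the sparse extraction.

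Relatedly, your explanation of where the threshold $p_j>2^{m-1}$ enters is incorrect. The exponents $p_j$ play no role in the proof of Theorem~\ref{mains}; they appear in the hypothesis only because the statement is phrased in parallel with Theorem~\ref{mainp}. The stopping time does not use an $L^{p_j/2^{m-1}}$ strong bound for the dyadic maximal function, but the weak-type bound for $M_\alpha$ at the fixed exponent $2^{m-1}$, which is available precisely because $\alpha>0$. The exponent $2^{m-1}$ is forced by the cubical Brascamp--Lieb inequality (Lemma~\ref{l:brascamplieb}) and is baked into the definition of $[\,\cdot\,]_S$, not introduced through any use of the $p_j$.
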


The reduction of Theorem \ref{mainp} to Theorem \ref{mains} will be presented in Section \ref{S:maintheorem}. It follows from a well known paradigm   used in \cite{CD-PO18} in the context of the bilinear Hilbert transform, another prominent singular Brascamp--Lieb form.
In our instance, the reduction is essentially a consequence of  Theorem  1.11 in 
\cite{Z-K19}.   Indeed, one
obtains in \cite{Z-K19} more general bounds in weighted $L^p$ spaces, with good dependence
on the weight classes. Further, one can deduce vector valued bounds as in \cite{Ni19}.

We highlight a further reduction 
of Theorem \ref{mains}
to local bounds formulated in Theorem \ref{maint} below. The reduction follows some standard procedures and will be elaborated in Section \ref{S:reduction}. Note
the appearance of the modified strong maximal function in
 Theorem \ref{maint}, which will be key to the inductive approach to this theorem.

The quantity $K(\Pi^J x)$ is invariant under translation in direction of the kernel of $\Pi^J$, which is the range of $J$, and obeys strong regularity assumption invariant under isotropic scaling. It therefore can be naturally written as an integral over the group spanned by
dilations and by translations in direction of the range of $J$ of forms defined by certain bump functions in $x$ in place of $K(\Pi^J x)$.
Following \cite{DT20}, using suitable windowed Fourier transforms on the multiplier $\widehat{K}$, we write \eqref{def:sblc} as superposition of forms
\begin{equation}\label{form:decomposed}
\int_0^\infty  \int_{\R^m} \frac{c(t)}{1+|u|^{4m}}\int_{\R^{2m}}  \Big ( \prod_{j\in \C} F_j(\Pi_j x)\Big )  (\partial_i^0 \partial_{i}^1g)_t(x-Jp + ut)  \, dx \, dp \, \frac{dt}{t},
\end{equation}
where  $1\le i\le m$, $\|c\|_\infty\le 1$, $u\in \R^{2m}$,  $g(x)=e^{-\pi|x|^2}$ is the Gaussian in $\R^{2m}$, and for a function $\phi$ on $\R^m$ we use the notation $\phi_t(x) = t^{-m} \phi (t^{-1}x)$.  
We have written $\partial_i^0$
and $\partial_i^1$ in analogy to the convention in \eqref{def:Pij} for vectors in $\R^{2m}$.
It suffices to prove bounds of Theorem \ref{mains} with
the left-hand side of \eqref{E:sparse}
replaced by the absolute value of 
\eqref{form:decomposed}.

The variables $(p,t)$ in \eqref{form:decomposed} 
are in the upper half space $\R^m \times(0,\infty) $, the above mentioned symmetry group. We decompose the upper
half space into boxes 
$Q\times  ({\ell(Q)}/{2},\ell(Q))$, where $Q$ is a dyadic cube and $\ell(Q)$ denotes its side length. For a collection $\T$ of dyadic cubes we define
$$\Omega_\mathcal{T} := \cup_{Q\in \T} Q\times  ({\ell(Q)}/{2},\ell(Q))$$
and we set
\begin{equation}
\label{form:local}
\Lambda_{\T}((F_j)_{j\in \C}):= \int_{\Omega_{\mathcal{T}}} \frac{c(t)}{1+|u|^{4m}}
   \int_{\R^{2m}}   \Big ( \prod_{j\in \C} F_j(\Pi_j x)\Big )
 (\partial_i^0 \partial_{i}^1g)_t(x-Jp+ut)  \, dx \, dp \, \frac{dt}{t}.
\end{equation}

A collection $\T$ of dyadic cubes  is called a convex tree or a stopping time,  if there exists $Q_\T\in \T$ such that for all $Q\in \T$ we have $Q\subseteq Q_\T$ and 
$Q'\in \T$ whenever 
 $Q\subseteq Q'\subseteq Q_\T$.

Let $\mathcal{R}$ be the set of rectangular boxes in $\R^m$, that is boxes of the form
$R=I_1\times \dots \times I_m$, with intervals $I_1,\dots ,I_m$ that are not necessarily
dyadic and not necessarily of equal length.
We denote by  $\varepsilon_R$  the reciprocal of  the eccentricity of $R$, that is  
$$
\varepsilon_R=\big( \min_{1\leq k \leq m} |I_k|\big ) \big( \max_{1\leq k \leq m} |I_k| \big)^{-1}.
$$
We define for $\alpha\ge 0$ the modified strong
maximal operator
\begin{equation}\label{E:tildestrong}
    {M}_\alpha(F)(x):= \sup_{R\in \mathcal{R}: x\in R}(\varepsilon_{R}^{\alpha})
    ^{2^{1-m}}
    [F]_{R}.
\end{equation}
This is a weighted version, with weight depending on the eccentricity, of the classical strong $L^{2^{m-1}}$ maximal operator.
We will typically see  expressions
of the type ${M}_\alpha(F)(Ax)$
for some $m\times m$ matrix $A$. 
For a dyadic cube $Q$ we define 
$${M}_\alpha(F)(AQ):=
 \sup_{R\in \mathcal{R}: AQ\subseteq R}
 (\varepsilon_{R}^{\alpha})
    ^{2^{1-m}}
    [F]_{R}.$$
Here $AS$ for some set $S$ means the set of all $Ax$ with $x\in S$.
For a collection $\T$ of dyadic cubes
we define 
$${M}_\alpha(F)(A\T):=
 \sup_{Q\in \T} {M}_\alpha(F)(AQ).$$

\begin{theorem}
\label{maint}
Let $\Delta>1$ and  $\alpha<\frac 1m$.
There exists  $C>0 $
such that the following holds.
Let $J$, $A_j$ be as in Theorem \ref{mains}.
 For    any convex tree $\mathcal{T}$,
for all $c,u,i$ satisfying conditions specified near \eqref{form:decomposed} and all tuples    $(F_j)_{j\in \mathcal{C}}$ we have
\[| \Lambda_{\mathcal{T}}((F_j)_{j\in \C}) | \leq C  |Q_{\T}| \prod_{j\in \C} {M}_\alpha(F_j)(A_j\T). \]
\end{theorem}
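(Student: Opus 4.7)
My plan is to prove Theorem~\ref{maint} by iterating a three-step procedure — Fourier expansion, telescoping in the scale parameter $t$, and Cauchy-Schwarz — once for each coordinate direction $i\in\{1,\dots,m\}$. The goal of each iteration is to collapse the cube $\C$ along the $i$-th direction by pairing functions $F_j$ and $F_{j'}$ whose indices differ only in that coordinate, thus reducing to a form over a cube of dimension $m-1$. After $m$ iterations only rectangular averages of the $F_j$'s remain, which are dominated by $M_\alpha(F_j)(A_j\T)$. This is the natural generalization to general-position $\Pi^J$ of the scheme used in \cite{DT20}.

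For a single iteration, the kernel $(\partial_i^0\partial_i^1 g)_t$ in \eqref{form:local} is a mixed second derivative in the $i$-th coordinate of both the $0$-block and the $1$-block. When $\Pi^J$ is in generic position, the associated multiplier does not telescope algebraically along dyadic scales, so one first decomposes it into pieces localized in frequency cones adapted to $\Pi^J$ and to the direction $i$. Combined with a Fourier expansion of one of the factors $F_j$ at scale $\ell(Q_\T)$, the integral over each dyadic strip $Q\times(\ell(Q)/2,\ell(Q))$ with $Q\in\T$ can then be written as a telescope, producing a main term at the top scale $\ell(Q_\T)$ together with boundary contributions at interior faces of $\T$. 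A Cauchy-Schwarz step on the main term pairs the functions across the $i$-th coordinate, yielding an analogous form of dimension $m-1$ — but with a new, smoother kernel supported on rectangles rather than cubes, because of the cone decomposition.

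After $m$ iterations the form has collapsed to rectangular averages of $|F_j|^{2^{m-1}}$. Each iteration introduces a loss depending on the eccentricity $\varepsilon_R$ of the rectangle produced by the cone decomposition, and these losses are absorbed by the weight $\varepsilon_R^\alpha$ built into the definition of $M_\alpha$ provided that the accumulated eccentricity after $m$ iterations is controlled. Tracking the losses carefully, one finds that the required condition is precisely $\alpha<\frac{1}{m}$, which explains the dimensional threshold in the hypothesis of Theorem~\ref{maint}.

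The principal obstacle will be the boundary terms produced at each telescoping step. Since $\T$ is an arbitrary convex tree, these boundaries can accumulate throughout $Q_\T$ rather than being localized near $\partial Q_\T$, so they cannot be controlled cube-by-cube. My plan is to group boundary contributions from rectangles of comparable eccentricity and invoke an additive measure-theoretic argument: the total measure of boundary layers at a given scale inside $Q_\T$ is $O(|Q_\T|)$, up to corrections summable in the scale parameter thanks to the Gaussian decay in $g$ and the polynomial weight $(1+|u|^{4m})^{-1}$. Combined with the eccentricity bookkeeping, this yields the claimed local bound.
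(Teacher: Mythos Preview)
Your high-level strategy — iterating Cauchy--Schwarz, telescoping, and a cone/Fourier decomposition once per coordinate direction, tracking eccentricities to explain the threshold $\alpha<1/m$, and handling tree-boundary terms via an additive measure argument — matches the paper's overall scheme. But there are two genuine gaps.

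First, the paper does not run the induction directly on $\Lambda_\T$. It first passes (Section~\ref{S:morered}) from the $(p,t)$-localized form $\Lambda_\T$ to the $x$-localized form $\tilde\Lambda_\T$ of Theorem~\ref{mainx}, estimating the difference via the Gaussian tail bound of Lemma~\ref{l.gauss}, the Brascamp--Lieb inequality, and the boundary-counting Lemma~\ref{L:boundary}. Only $\tilde\Lambda_\T$ carries the indicators $1_{A_jT_k}$ on the functions while leaving the $p$-integral over all of $\R^m$, so that Fourier analysis in $p$ is available in the inductive steps. Your proposal skips this reduction; running the telescoping/Fourier argument with $p$ restricted to cubes of $\T$ would be problematic.

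Second, and more seriously, your order of operations within each iteration is inverted, and this is not cosmetic. You propose to cone-decompose the kernel $\partial_i^0\partial_i^1 g$ so that it telescopes, and then apply Cauchy--Schwarz to the resulting main term. But $\partial_i^0\partial_i^1 g$ (multiplier $\eta_i^0\eta_i^1\,\hat g(\eta)$) cannot be made to telescope by a cone decomposition: what telescopes via the heat equation is the full Laplacian-type kernel with multiplier $|\eta|^2\hat g(\eta)$. In the paper the steps run the other way. Cauchy--Schwarz is applied \emph{first} to the form with kernel $\partial_i^0\partial_i^1 g$, producing a form with the new symmetry $F_j=F_{i\ast j}$; this form is \emph{positive}, and so are the analogous forms in all previously symmetrized directions (see~\eqref{e:positivity}). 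That positivity lets one dominate by the sum over $\kappa=1,\dots,l{+}1$, i.e.\ by the third kernel~\eqref{e:thirdkernel}, which via the algebraic identity~\eqref{E:firstdisplay}--\eqref{e:lastdisplay} differs from the telescoping Laplacian kernel by the second kernel~\eqref{e:secondkernel}. Only \emph{then} does cone decomposition enter (Section~\ref{S:reexpansion}), to break the second kernel back into first-kernel pieces at the next level. Without the positivity step there is no mechanism to pass from a single direction $i$ to the Laplacian, and your plan as written stalls at the first telescoping step. Your proposal never mentions this positivity argument, and your line about ``Fourier expansion of one of the factors $F_j$'' has no counterpart in the actual argument: all the Fourier analysis is on the kernel, not on the $F_j$.
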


The upper bound on the parameter $\alpha$ is not so important for the deduction of Theorem \ref{mains} from Theorem \ref{maint}, here a classical $2^{m-1}$-Hardy-Littlewood operator would do, which is a limit of $M_\alpha$ as $\alpha$ tends to $\infty$. The upper bound is however needed for the proof of Theorem \ref{maint},
except in particular cases 
such as  $m=2$ or  $J^0=J^1=I$.
While the classical strong $2^{m-1}$-maximal operator, which is the case $\alpha=0$ of $M_\alpha$,
does not satisfy a weak type $2^{m-1}$ bound,
the operator $M_\alpha$ with $\alpha>0$
does, as we will show in Lemma \ref{L:endpoint_bound}. Related to this we also recall that the classical strong maximal operator cannot be sparsely dominated~\cite{BCOR}, while the result of Theorem~\ref{maint} with $\alpha>0$ allows us to obtain the sparse bounds of Theorem~\ref{mains}.

We highlight a further reduction of Theorem \ref{maint} to estimates in Theorem \ref{mainx}
for a form that has localization 
of the $x$ variables rather than localization in the $(p,t)$ variables. The localization in $x$ is somewhat more precise, because
it truncates $2m$ variables rather than $m$ variables. Moreover, it facilitates Fourier
transform techniques in the  variable $p$.
A truncation in $x$ is however
not useful in the deduction of Theorem \ref{mains}, as this truncation  requires global information on the tree and therefore poorly commutes with the tree selection algorithm in the proof of sparse domination.

With notation near \eqref{form:local}, define
$\wt{\Lambda}_{\T}((F_j)_{j\in \C})$ as
\begin{equation*}
\sum_{k\in \Z} \int_{2^{k-1}}^{2^k} \int_{\R^m} \frac{c(t)}{1+|u|^{4m}}
   \int_{\R^{2m}}   \Big ( \prod_{j\in \C} F_j1_{A_jT_k}(\Pi_j x)\Big ) (\partial^0_i \partial^1_{i}g)_t(x-Jp+ut) \,  dx \, dp \, \frac{dt}{t},
\end{equation*}
where for a tree $\mathcal{T}$ we have written  
$$  T_k := \cup\{Q\in \T: l(Q)=2^k\} .$$

\begin{theorem}\label{mainx}

Let $\Delta>1$ and  $\alpha<\frac 1m$.
There exists  $C>0 $
such that the following holds.
Let $J$, $A_j$ be as in Theorem \ref{mains} .
 For    any convex tree $\mathcal{T}$,
for all $c,u,i$ satisfying conditions specified near \eqref{form:decomposed}
, and all tuples   $(F_j)_{j\in \C}$ we have 
\[| \wt{\Lambda}_{\mathcal{T}}((F_j)_{j\in \C}) | \leq C  |Q_{\T}| \prod_{j\in \C} {M}_\alpha(F_j)(A_j\T). \]
\end{theorem}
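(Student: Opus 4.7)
The plan is to follow the scheme advertised in the introduction: Fourier-expand the multiplier of the bump kernel, apply a cone decomposition to the resulting pieces, and iterate Cauchy--Schwarz $m$ times along the coordinate directions of the cube $\C$, exploiting the telescoping that the cubical structure provides. The $x$-truncation built into $\wt{\Lambda}_{\T}$ serves as the vehicle that transports the eccentric rectangles produced by the induction back to the averages defining $M_\alpha(F_j)(A_j\T)$.

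More concretely, at fixed $t$ and $u$ I would start from the factor $(\partial_i^0\partial_i^1 g)_t(x-Jp+ut)$. Since $g$ is Gaussian on $\R^{2m}$ it tensorises across the $2m$ coordinates, and the two derivatives contribute the symbol $\eta_i^0\eta_i^1$ on the Fourier side. Integrating in $p$ produces a multiplier supported on the annihilator of the range of $J$ times a Gaussian in the complementary directions; to make this compatible with the cubical product $\prod_{j\in\C} F_j\circ \Pi_j$, the multiplier is decomposed as in \eqref{decrsg} into pieces adapted to rectangles whose eccentricity is recorded by matrices $D$ and $E$. On each such piece I would iterate Cauchy--Schwarz along the $m$ directions of the cube, doubling one coordinate of $x$ into two copies at each step and pairing reflected tuples of the $F_j$, in the symmetrisation style of \cite{K12:tp} as used in \cite{DT20}. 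Because $\Pi^J$ is generic, the telescoping identities are only approximate and produce boundary contributions attached to the eccentric rectangles.

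Once the iteration is complete, the $x$-localisation built into $\wt{\Lambda}_{\T}$ would be exploited to bound each of the $2^m$ factors by $[F_j]_R$ weighted by an appropriate power of $\varepsilon_R$. For each $Q\in \T$ to which a given piece is associated, the eccentric rectangle $R$ contains $A_j Q$, so the weighted average is dominated by ${M}_\alpha(F_j)(A_j Q)\le {M}_\alpha(F_j)(A_j\T)$. Summing over dyadic scales $t\in (2^{k-1},2^k]$, together with the additive measure-theoretic argument flagged near \eqref{e:set1} for the cubes of $\T$ at a fixed scale, then consolidates the remaining boundary contributions into a single geometric factor $|Q_{\T}|$.

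The step I expect to be the main obstacle is the bookkeeping of eccentricities across the $m$ rounds of cone decomposition and Cauchy--Schwarz. Each round contributes a weight of the form $\varepsilon^{\alpha \cdot 2^{1-m}}$ and the threshold $\alpha<1/m$ is exactly what is needed so that the product of these weights, integrated against the $u$-density $(1+|u|^{4m})^{-1}$ and summed over the $D$, $E$ parameters, remains controlled by the $\Delta$-regularity of the $A_j$. The special cases $m=2$ and $J^0=J^1=I$ singled out in the introduction are precisely those in which the telescoping is algebraically exact and no eccentric boxes arise, so that this bookkeeping collapses; outside these cases the interaction between the cone decomposition and a generic $\Pi^J$ is the delicate technical heart of the argument.
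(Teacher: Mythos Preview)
Your sketch correctly identifies all the ingredients the paper uses --- Fourier-side analysis of the bump kernel, cone decomposition, telescoping, $m$-fold Cauchy--Schwarz along the cube directions, and the eccentricity bookkeeping responsible for the threshold $\alpha<1/m$ --- and your diagnosis of the special cases $m=2$ and $J^0=J^1=I$ matches the paper. In that sense the approach is the same.

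Where the paper differs from your outline is in the \emph{organization}. You describe a linear pipeline: decompose once, then iterate Cauchy--Schwarz $m$ times, then sum the boundary terms at the end. The paper instead formulates an auxiliary Proposition (5.1) indexed by a symmetry count $0\le l\le m$ and by a diagonal dilation matrix $D$, and proves it by downward induction on $l$. Each inductive step is itself a three-part cycle: (i) a single Cauchy--Schwarz in the $i$-th coordinate direction raises $l$ to $l+1$ and replaces $D$ by a new diagonal matrix $E$; (ii) a heat-equation telescoping identity converts the resulting kernel to a sum $\sum_{\kappa\le l+1}\partial_\kappa^0\partial_\kappa^1 g$, and the boundary terms this telescoping produces are estimated \emph{at that level} by the Brascamp--Lieb and set-arithmetic argument you flag near \eqref{e:set1}; (iii) a cone decomposition (the ``expansion'' step) rewrites the symmetrised kernel back in the first-kernel form so the induction can continue. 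Thus the cone decomposition, the telescoping, and the boundary-term estimate each occur $m$ times, interleaved with the Cauchy--Schwarz steps, not once up front or once at the end.

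One substantive point your sketch omits is the positivity argument \eqref{e:positivity}: after Cauchy--Schwarz at level $l$ one must estimate $\Lambda$ for the kernel $\partial_{l+1}^0\partial_{l+1}^1 g$, but the inductive hypothesis only bounds the full sum $\sum_{\kappa=1}^{l+1}\partial_\kappa^0\partial_\kappa^1 g$. The paper observes that each individual summand gives a nonnegative form (by the symmetry already present in the $F_j$), so the single term is dominated by the sum. Without this you would have to re-symmetrise directions already handled, and the induction would not close.
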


The following theorem shows sharpness of one assumption in Theorem \ref{mainp}:
\begin{theorem}\label{T:sharp}
The variant of Theorem \ref{mainp},
where the threshold $2^{m-1}$ in \eqref{E:prange}
is replaced by a smaller number, is false.
\end{theorem}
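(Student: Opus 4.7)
The plan is to construct a family of explicit counterexamples showing that when some exponent $p_{j_0}$ drops to $2^{m-1}$ (hence to any smaller threshold), the ratio of the left-hand side of Theorem \ref{mainp} to $\prod_j\|F_j\|_{p_j}$ diverges.  By the reflection symmetry of the cube $\C$ one may assume $j_0=\vec 0$.  Specialize $J=\binom{I}{I}$, so $\Pi^J=(I,-I)$ and $\Pi^J x=x^0-x^1$; all $A_j=\Pi_jJ=\pm I$ are $\Delta$-regular with $\Delta=1$.  Choose the Hölder tuple
\[ p_j=2^{m-1}\text{ for }j\text{ with }j_1=0,\qquad p_j=\infty\text{ for }j\text{ with }j_1=1,\]
which satisfies $\sum_j 1/p_j=2^{m-1}\cdot 2^{1-m}=1$ and in which $p_{\vec 0}=2^{m-1}$; this tuple suffices because it lies in the allowed range of every variant of \eqref{E:prange} with threshold strictly below $2^{m-1}$.

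Next, test against tensor-product functions $F_j(y_1,\dots,y_m)=f_{j_1}(y_1)\chi(y_2)\cdots\chi(y_m)$, with $\chi\in C_c^\infty(\R)$ a fixed bump and $f_0=1_{[0,1]}$, $f_1=1_{[0,R]}$ for a large parameter $R$ (slightly mollified to meet the $C_c^\infty$ assumption).  Because $(1_E)^{2^{m-1}}=1_E$ and each coordinate index $i$ in $\{2,\dots,m\}$ sees either value of $j_i$ exactly $2^{m-1}$ times as $j$ runs through $\C$, one obtains
\[\prod_{j\in \C}F_j(\Pi_j x)=1_{[0,1]}(x^0_1)\,1_{[0,R]}(x^1_1)\prod_{i=2}^{m}\chi^{2^{m-1}}(x^0_i)\chi^{2^{m-1}}(x^1_i).\]
Changing variables to $u=x^0-x^1$ and $v=x^1$ and integrating out $v$ turns the form into
\[\Lambda=\int G(u_1)\,\widetilde K(u_1)\,du_1,\qquad G(u_1)=\int 1_{[0,1]}(u_1+v)1_{[0,R]}(v)\,dv,\]
where $\widetilde K(u_1)=\int \prod_{i\ge 2}(\chi^{2^{m-1}}\!\star\chi^{2^{m-1}})(u_i)\,K(u_1,u_2,\dots,u_m)\,du_2\cdots du_m$ is the one-dimensional trace of $K$ smeared by fixed bumps.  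The function $G$ is the trapezoid equal to $1$ on $[-R+1,0]$ with two linear ramps of unit length at the endpoints.

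The key step is to choose $K$ so that $\widetilde K$ retains a classical one-dimensional Hilbert-type cancellation $\int_{-R}^{-1}\widetilde K(u_1)\,du_1\asymp \log R$.  Concretely, pick $K$ whose Fourier multiplier has the form $\widehat K(\eta)=\phi(\eta_1/|\eta|)\psi(|\eta|)$ with $\phi$ an odd, smooth, compactly supported function satisfying $\int_{S^{m-1}}\phi(\omega_1)\omega_1\,d\sigma\neq 0$, and $\psi$ a radial cut-off adapted to a large annulus.  Such $K$ verifies the symbol bounds of order $2^{6m}$, and a direct computation shows $\widetilde K(u_1)\sim c/u_1$ in a principal-value sense for $u_1$ in the range $[-R,-1]$.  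Pairing against $G$ gives $|\Lambda|\gtrsim \log R$, while $\prod_{j\in \C}\|F_j\|_{p_j}$ is bounded by a constant depending only on $\chi$ (since $\|f_0\|_{2^{m-1}}=1$, $\|f_1\|_\infty=1$, and the $\chi$-factors contribute a finite product).  Letting $R\to\infty$ forces the ratio $|\Lambda|/\prod\|F_j\|_{p_j}\to\infty$, which contradicts any putative bound in the variant with threshold $<2^{m-1}$.

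The main obstacle is the passage from $K$ on $\R^m$ to the effective one-dimensional kernel $\widetilde K$: a generic Calderón-Zygmund kernel obeys $|K|\lesssim |u|^{-m}$, whose trace decays too quickly to produce the desired $\log R$ growth.  The construction above circumvents this by exploiting the angular structure of $\widehat K$, which localizes the Fourier support so that the orthogonal smearing by $\prod_{i\ge 2}(\chi^{2^{m-1}}\!\star\chi^{2^{m-1}})$ picks out only the directions with $\eta_1$ comparable to $|\eta|$, preserving the Hilbert-type singularity along the $u_1$-axis.  Verifying the lower bound $|\widetilde K(u_1)|\gtrsim|u_1|^{-1}$ on an interval of length $\asymp R$ is where the detailed Fourier calculation enters; once this is in hand, the divergence of $\log R$ against a bounded product of norms completes the proof.
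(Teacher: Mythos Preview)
The approach has a genuine gap. With your fixed bump $\chi$, the transverse smearing function $H=\chi^{2^{m-1}}\star\chi^{2^{m-1}}$ is compactly supported, say in $[-M,M]$. For \emph{any} Calder\'on--Zygmund kernel $K$ in the sense of the paper one has the standard pointwise bound $|K(u)|\le C|u|^{-m}$ away from the origin. Hence for $|u_1|\ge 2M$,
\[
|\widetilde K(u_1)|\le\int_{[-M,M]^{m-1}}|K(u_1,u')|\prod_{i\ge2}|H(u_i)|\,du'\le C'|u_1|^{-m},
\]
so $\int_{-R}^{-1}|\widetilde K|$ stays bounded as $R\to\infty$ and there is no $\log R$ growth. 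No choice of angular structure in $\widehat K$ can overcome this, because the obstruction is the universal size bound $|K|\lesssim|u|^{-m}$, not the direction of concentration. Your asserted estimate $\widetilde K(u_1)\sim c/u_1$ is therefore false for every admissible $K$ when $\chi$ is held fixed.

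The paper's proof avoids this by letting the transverse supports grow with the parameter $N$: the functions $F_j$ equal $1$ on boxes of side $\asymp N$ in the directions $x_2,\dots,x_m$, so that $\|F_j\|_{2^{m-1}}$ grows like $N^{(m-1)2^{1-m}}$ for the $2^{m-1}$ functions carrying exponent $2^{m-1}$, whence $\prod_j\|F_j\|_{p_j}\asymp N^{m-1}$. With $K$ the first Riesz kernel $x_1/|x|^{m+1}$, positivity of the integrand on the support then yields a lower bound of order $N^{m-1}\log N$ for the form by direct integration, and the ratio still diverges like $\log N$. Your framework would recover this if you let $\chi$ be a bump at scale $N$: then $H$ lives at scale $N$, the trace is not truncated for $|u_1|\le N$, and the Hilbert-type behaviour survives on that range---but you must then track the $N$-dependence of the norms, exactly as the paper does.
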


The examples used to prove Theorem \ref{T:sharp} are similar to the ones in \cite{K12:tp} and \cite{DR21}.
Theorem \ref{T:sharp} also implies that similarly the lower bound $2^{m-1}$ in Theorem \ref{mains} can not be lowered.
We stress that we do not know the sharp
range of tuples of exponents $(p_j)_j$ for the conclusion of Theorem \ref{mainp} to hold. It may be possible to lower some of the exponents $p_j$ below $2^{m-1}$ at the expense of demanding additional constraints on the other exponents.

The results in this paper are continuous analogues of
some of the estimates in \cite{St19} in the dyadic 
setting. It would be desirable to have a more complete understanding of the analogs of results in \cite{St19} in the continuous setting.
In the broader context, there are a number of similar results in the dyadic setting which await a transfer into the continuous setting, such as for example \cite{MT17}, \cite{OT11}, \cite{KTZ-K}.

In Section \ref{S:maintheorem}, we reduce Theorem \ref{mainp} to Theorem \ref{mains}.
In Section \ref{S:reduction}, we reduce Theorem \ref{mains} to Theorem \ref{maint}.
In Section \ref{S:morered}, we reduce Theorem \ref{maint} to Theorem \ref{mainx}, this section should be compared with ~\cite[Lemma 6 and Lemma 7]{D15}.
In Section \ref{S:general}, we prove
Theorem \ref{mainx}
by an induction on the number of symmetries of the form
and the tuple $(F_j)_{j\in \C}$ as in \cite{DT20}. The elaborate induction statement splits
the proof into three natural steps, which are
done in three separate subsections.
In Section \ref{S:sharpness}, we prove Theorem \ref{T:sharp}.
 
A survey on singular Brascamp-Lieb forms is found in \cite{DT19}. We like to point the reader also at some more recent developments since the survey
such as \cite{bm}, \cite {mz}.
Comparing for example the results in \cite{bm} with our result, note that there the argument of the kernel $K$ has much larger dimension than the arguments of the functions, while in our result the argument of the kernel has same dimension as the arguments of the functions. Also \cite{bm}
has more general exponents than H\"older. It might be interesting to study Brascamp-Lieb data in the vicinity of our data that lead to non-H\"older scaling. We have not endeavoured in this direction yet.

 \subsection*{Acknowledgements}  The first author is supported by NSF DMS-2154356.
 The second author is supported by the Primus research programme PRIMUS/21/SCI/002 of Charles University. The third author acknowledges support
by the Deutsche Forschungsgemeinschaft (DFG, German Research Foundation) under Germany's Excellence Strategy - EXC-2047/1 - 390685813 as well as SFB 1060. Part of the research was carried out during a delightful workshop
on Real and Harmonic Analysis
at the Oberwolfach Research Institute for Mathematics.
The authors thank the anonymous referee for their careful reading of the paper and  valuable suggestions.

\section{Proof of Theorem \ref{mainp}}

\label{S:maintheorem}

In this section we reduce Theorem \ref{mainp} to Theorem \ref{mains}. 
Let $\Delta>1$. Let $(p_j)_{j\in \C}$ satisfy \eqref{E:prange} and \eqref{holder}. Let $K$ be a
Calder\'on-Zygmund kernel in $\R^m$ of order $2^{6m}$ and  let  $(F_j)_{j\in \C}$ be a tuple of smooth compactly supported functions.   Let $\Pi$ be a real $m\times 2m$ matrix such that $\Pi_j\Pi^T$ is $\Delta$-regular for each $j\in \C$.

 The $\Delta$-regularity of $\Pi_0\Pi^T$ and $\Pi_1\Pi^T$ implies that the left and right $m\times m$ blocks of $\Pi$ are $\Delta$-regular. Here we denote by $0$ and $1$ the respective constant functions in $\C$. 
    Thus,  the matrix $\Pi$ is of the form $(({J}^0)^{-1},-(J^1)^{-1})$ for some  $m\times m$ $\Delta$-regular matrices $J^0, J^1$.
    We write
$$K(\Pi x)=K(J_0^{-1}J_0\Pi x)=\tilde{K}(J_0 \Pi x)
=\tilde{K}(\Pi^J x)
$$
with $\tilde{K}=K\circ J_0^{-1}$
and $J=((J^0)^T,(J^1)^T)^T$. Then $\tilde{K}$
is up to a constant multiple depending on $\Delta$ also Calder\'on-Zygmund, and
we will apply Theorem \ref{mains} with the kernel $\tilde{K}$
and the matrix $J$.
 
 We verify that the matrices 
 $A_j=\Pi_j J$  are  $\Delta'$ regular for all $j$ and a $\Delta'$ depending on $\Delta$.
 The upper bound on the norm of $A_j$ follows 
 from upper bounds on the norm
 of $J$, and it then suffices to prove a lower bound on the
 absolute value of the determinant of $A_j$.
 By $\Delta$-regularity of $J_1$, it suffices to prove lower bounds on the determinant
 of 
 $$B_j:=A_j((J^1)^{-1})=
 \Pi_j((J^0(J^1)^{-1})^T, I)^T.$$
 
 The matrix $B_j$ 
 arises from 
 $V:=J^0(J^{1})^{-1}$
 by replacing each row with an 
 index $i$ with $j(i)=1$
 by the corresponding
 $i$-th row of the identity matrix.
 The determinant of $B_j$ is therefore equal to the determinant of the submatrix $W$ of $V$ obtained by erasing all
 rows and columns with index $i$ such that $j(i)=1$.
Similarly, the determinant of $-W$ is 
equal to the determinant of
$\Pi_{\overline{j}}(\Pi^J)^T$,
 where $\overline{j}$ denotes the corner of $\C$ opposite to $j$.
 The latter determinant is however bounded below by the assumed
 $\Delta$-regularity of 
 $\Pi_{\overline{j}}(\Pi^J)^T$. This proves the desired lower bound on
 the determinant of $A_j$.

Theorem \ref{mains} yields a 
sparse  collection $\Sp$ of dyadic cubes in $\R^m$ such that the analogue of \eqref{E:sparse} for $\tilde{K}$ holds. 
To deduce Theorem \ref{mainp} from this bound, we first estimate the right-hand side of \eqref{E:sparse} by a sum of averages over cubes in  dyadic grids. Here, by a dyadic grid we mean any collection $\mathcal{D}$ of measurable subsets of $\R^m$ such that any   $Q,Q'\in \mathcal{D}$ are either disjoint or one is contained in the other.   It follows from Definition 2.1 and Theorem 3.1 in \cite{LN19}  that there exist $3^m$ dyadic grids $\mathcal{D}_i$, $1\leq i \leq 3^m$, such that each $Q^*$ associated with a cube $Q \in \Sp$ belongs to $\mathcal{D}_i$ for some $i$. Thus,   
\begin{equation}\label{sparsegrd}
\sum_{Q \in \Sp} |Q| \prod_{j\in \C} [F_j\circ A_j ]_{Q^*} \leq \sum_{i=1}^{3^m}\sum_{Q^* \in \mathcal{S}_i} |Q^*| \prod_{j\in \C} [F_j\circ A_j ]_{Q^*},
\end{equation}
where
\[\mathcal{S}_i=\{Q^*\in \mathcal{D}_i : Q\in \mathcal{S} \} . \]
The collections $\mathcal{S}_i$ are $\frac{1}{2}3^{-m}$-sparse because
\[\sum_{{(Q')}^{*}\in \mathcal{S}_i: {(Q')}^*\subseteq Q^*}|{Q'}^*|\leq 3^m \sum_{{Q'}\in \mathcal{S}: {(Q')}^*\subseteq Q^*} |{Q'}|  \leq 2\cdot 3^m|Q^*| , \]
where in the last inequality we used that  $(Q')^*\subseteq Q^*$ implies that $Q'$ is contained in one of the $3^m$ cubes of the same size as $Q$ that are contained in $Q^*$ and we applied the assumption of sparsity of $\mathcal S$ on each of these cubes.
Applying
Theorem  1.11 in 
\cite{Z-K19}  with $\rho_i:=0$, $r_i:=1$,  $t_i:=p_i$,  and $w_i:=1$ to \eqref{sparsegrd} for each $1\leq i \leq 3^m$  yields   Theorem \ref{mainp}.

Alternatively, the reduction of Theorem \ref{mainp} to Theorem \ref{mains}  might possibly be accomplished with estimates for sparse operators rather than sparse forms as in \cite{LMS14}. We did not try to pursue this approach but
refer to \cite{LN19} for a survey.
In the formalism of outer $L^p$ spaces of \cite{DoT15}, the reduction is
an estimate of the
right-hand side of \eqref{E:sparse}  with a suitable outer H\"older inequality by
$$\leq C\|1_{\mathcal S}\|_{L^\infty(\ell^1)}\prod_{j\in \C} \|\mathcal{F}_j F_j\|_{L^{p_j}(\ell^\infty)}$$
for suitable embeddings $\mathcal{F}_j$.
Sparsity of $\mathcal S$ is tautological to a bound 
on $\|1_{\mathcal S}\|_{L^\infty(\ell^1)}$, and embedding theorems give the estimates 
$ \|\mathcal {F}_j F_j\|_{L^{p_j}(\ell^\infty)}\le C\|F_j\|_{p_j}$.

\section{Proof of Theorem \ref{mains}}

\label{S:reduction}

We present the proof of Theorem \ref{mains}, using Theorem \ref{maint} and some lemmas that will be stated and proved at the end of this section.
Let  $\Delta$,  $p_j$ be  as in Theorem \ref{mains}.
We need to find a constant
$C$ such that the sparse bound of Theorem \ref{mains} holds for all
$J$, $A_j$  as in Theorem \ref{mains}, all
tuples $(F_j)_{j\in \C}$ of compactly supported smooth functions and all Calder\'on-Zygmund kernels $K$ in $\R^m$.
We will find constants for various related inequalities 
and by abuse of notation will use the letter $C$ for each of them. As a consequence, the meaning of $C$ may change from line to line. 

As discussed in the introduction and elaborated in detail in \cite{DT20}, 
by superposition it suffices to show that the sparse bound holds for the specific class of Calder\'on-Zygmund kernels associated with forms $\Lambda$ of the type \eqref{form:decomposed}.
The integral in the $(t,p)$ variables in \eqref{form:decomposed}
is absolutely integrable,
a typical theme in Calder\'on Zygmund theory that we elaborate
in Lemma \ref{l:integrable}.
It therefore suffices to show the sparse bound 
for the form $\Lambda_{{\mathcal Q}'}$
as in $\eqref{form:local}$
with arbitrary finite collection ${\mathcal Q}'$ of dyadic cubes.
Let thus $c,u,i$ be given as detailed near \eqref{form:decomposed}
and let ${\mathcal Q}'$ 
be given.
By making ${\mathcal Q}'$
larger if needed, we may assume
that ${\mathcal Q}'$ is convex in the sense that if $Q\subseteq Q'\subseteq Q''$ and $Q,Q''\in {\mathcal Q}'$, then also $Q'\in {\mathcal Q}'$. We may also assume
 that each maximal cube $Q$ in ${\mathcal Q'}$ 
 has the property that the support of each of  the functions $F_j\circ A_j$
 is contained in $Q^*$.
Here and in what follows, maximality of cubes is with respect
to the partial order by set inclusion.

We construct a sparse collection.
For each dyadic cube $Q$
let $\mathcal I(Q)$ be the set of maximal dyadic cubes $Q'\subseteq Q$ satisfying
\begin{equation*}
\sup_{R\in {\mathcal R}:A_jQ'\subseteq R} (\varepsilon_R^\alpha)^{2^{1-m}} [F_j 1_{A_jQ^*}]_R>c[F_j\circ A_j]_{Q^*} 
\end{equation*}
for some $j\in \C$, 
where $c$ is a certain positive constant  determined a few lines below.
For each $Q'\in \mathcal I(Q)$, there is $j\in \C$ such that $$M_\alpha (F_j1_{A_jQ^*})(A_j x) > c[F_j\circ A_j]_{Q^*}$$ for all $x\in {Q'}.$
With the weak bound for $M_\alpha$ in Lemma~\ref{L:endpoint_bound} below, we conclude
\begin{equation}\label{E:hardy_littlewood}
\sum_{Q' \in \mathcal I(Q)} |Q'| 
\leq C\sum_{j\in \C} |\{x\in \R^m:{M}_\alpha (F_j 1_{A_jQ^*})(A_jx)>c[F_j\circ A_j]_{Q^*}\}|\end{equation}
$$\leq C\sum_{j\in \C} (c[F_j\circ A_j]_{Q^*})^{-2^{m-1}} \int_{A_jQ^*} |F_j|^{2^{m-1}}
=Cc^{-2^{m-1}} \sum_{j\in \C}|A_jQ^*| \le  \frac{1}{2}|Q|,
$$
where $c$ is chosen large enough depending on $\Delta$ such that the last inequality holds.

Let $\Sp_0$ be the collection
of maximal dyadic cubes 
in ${\mathcal Q}'$. It is a finite collection of disjoint cubes.
For $n\geq 1$ define
$$
\Sp_n:=\bigcup_{{Q} \in \Sp_{n-1}} \I({Q})$$
and let $\Sp$ be the union of the sets $\Sp_n$ over all natural numbers $n$.
Using \eqref{E:hardy_littlewood} and summing a geometric series, we deduce that the collection $\Sp$ is sparse.

We aim to show
\begin{equation} \label{e:aimtoshow}
|\Lambda_{{\mathcal Q}'}((F_j)_{j\in \C})|
\leq C \sum_{Q\in \Sp} |Q| \prod_{j\in \C} [F_j\circ A_j ]_{Q^*}.
\end{equation}
Given a dyadic cube $Q$, denote by $\T_{\leq Q}$ the set of all dyadic cubes $Q'$ in  ${\mathcal Q}'$
with $Q'\subseteq Q$.
We observe
\begin{equation}\label{e:recbegin}\Lambda_{{\mathcal Q}'}
((F_j)_{j\in \C})=
\sum_{Q\in \Sp_0}\Lambda_{\T_{\le Q}}((F_j)_{j\in \C})=
\sum_{Q\in \Sp_0}\Lambda_{\T_{\le Q}}((F_j1_{A_jQ^*})_{j\in \C}).
\end{equation}
The first equality follows from the definition of $\Sp_0$ and $\T_{\le Q}$. The second equality follows by the assumption on the maximal cubes   $Q$ in ${\mathcal Q}'$.

Denote for any cube $Q\in \mathcal{S}$ by $\T_Q$ the convex tree 
$$
\T_Q :=\T_{\leq Q} \setminus \bigcup_{Q' \in \I(Q)} \T_{\leq {Q'}}.
$$
We have for $Q\in \mathcal{S}$
\begin{equation}\label{E:split}
\Lambda_{\T_{\leq Q}}((F_j 1_{A_jQ^*})_{j\in \C})
=\Lambda_{\T_Q}((F_j 1_{A_jQ^*})_{j\in \C})
+\sum_{Q'\in \I(Q)} \Lambda_{\T_{\leq Q'}} ((F_j 1_{A_jQ^*})_{j\in \C}).
\end{equation}
By Theorem \ref{maint}
and the definition of
$\mathcal{I}(Q)$, the first  term on the right-hand side of \eqref{E:split} can be estimated as
$$|\Lambda_{\T_Q}((F_j1_{A_jQ^*})_{j\in \C})|
\le C|Q| \prod_{j\in \C} M_\alpha(F_j1_{A_jQ^*})(A_j \T_Q) 
\le C |Q|\prod_{j\in \C} [F_j\circ A_j]_{Q^*}.$$
To estimate  the second term on the right-hand side of \eqref{E:split}, fix $Q' \in \I(Q)$ and split
$$
F_j1_{A_jQ^*}=F_j 1_{A_j(Q')^*} + F_j {1_{A_jQ^* \setminus A_j(Q')^*}}
$$
for each  $j\in \C$.
Multilinearity of the form
$\Lambda_{\T_{\leq Q'}}$ gives
$$\Lambda_{\T_{\leq Q'}} ((F_j 1_{A_jQ^*})_{j\in \C})=
\Lambda_{\T_{\leq Q'}} ((F_j 1_{A_j(Q')^*})_{j\in \C})
+\sum_{(E_j)_{j\in \C}\in {\mathcal E}}
\Lambda_{\T_{\leq Q'}} ((F_jE_j)_{j\in \C}),
$$
where ${\mathcal E}$ is the set of tuples with entries 
$1_{A_j(Q')^*}$ or  $1_{A_jQ^* \setminus A_j(Q')^*}$ such that
at least one $j$ satisfies $E_j=1_{A_jQ^* \setminus A_j(Q')^*}$.
The first term on the right hand side will be estimated by a recursive procedure, while the second term  is estimated by Lemma~\ref{L:maximal_function} and by the choice of ${\mathcal I}(Q)$. We obtain for the second term
$$
\sum_{(E_j)_{j\in \C}\in {\mathcal E}}
\Lambda_{\T_{\leq Q'}} ((F_jE_j)_{j\in \C})\le
C |Q'| \prod_{j\in \C} M_\alpha(F_j1_{A_jQ^*})(A_j Q')\le 
C |Q'| \prod_{j\in \C} [F_j\circ A_j]_{Q^*}.$$

Combining \eqref{e:recbegin}
and  \eqref{E:split} and
using the bounds we have just discussed, we have
\begin{equation*}
|\Lambda_{{\mathcal Q}'}((F_j)_{j\in \C})|
\le C \sum_{Q\in \Sp_0} |Q| \prod_{j\in \C} [F_j\circ A_j]_{Q^*}
+\sum_{Q\in \Sp_{1}} |\Lambda_{\T_{\leq Q}} ((F_j 1_{A_jQ^*})_{j\in \C})|.
\end{equation*}
Iterating this argument over $\Sp_1$, $\Sp_2$, and so on, and using that $\Sp_n\cap {\mathcal Q}'$ is empty for large $n$ by finiteness of ${\mathcal Q}'$, we obtain \eqref{e:aimtoshow}.
This completes the proof of Theorem \ref{mains}.

\begin{lemma}
[Weak type $2^{m-1}$ bound for $M_\alpha$]
\label{L:endpoint_bound}
For $\alpha>0$ and
$m\ge 2$, there is a $C>0$ such that the following holds. For
a measurable function ${F}$  on $\R^m$ and $\lambda>0$, we have with 
${M}_\alpha$ as in
\eqref{E:tildestrong},
\begin{equation*}
|\{x :M_\alpha ({F})(x)>\lambda\}| \le C \lambda^{-2^{m-1}} 
\|F\|_{2^{m-1}}^{2^{m-1}}.
\end{equation*}
\end{lemma}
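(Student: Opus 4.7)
The plan is to reduce this weak-type bound for $M_\alpha$ to a superposition of classical Hardy--Littlewood weak-$(1,1)$ inequalities, one for each aspect ratio of the defining rectangles. After raising to the power $2^{m-1}$ and setting $G := |F|^{2^{m-1}}$ (so $\|G\|_1 = \|F\|_{2^{m-1}}^{2^{m-1}}$), the claim is equivalent to the weak-$(1,1)$ estimate
\[
|\{x: NG(x) > \mu\}| \le C \mu^{-1} \|G\|_1,\qquad
NG(x) := \sup_{R \ni x} \varepsilon_R^{\alpha}\, |R|^{-1}\!\int_R G.
\]

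First, I would dyadically discretize the rectangles: any $R = I_1\times\cdots\times I_m \ni x$ with $|I_k|\in [2^{a_k},2^{a_k+1})$ is contained in a rectangle $R'$ with side lengths exactly $2^{a_k+1}$ and roughly the same center, satisfying $|R'|\le 2^m|R|$ and $\varepsilon_{R'}\sim \varepsilon_R$. So it suffices to take the supremum over rectangles of ``dyadic shape'', which I parametrize by a scale $b\in\Z$ and an aspect vector $c = (c_1,\dots,c_m)\in\Z_{\ge 0}^m$ with $\min_k c_k = 0$, the side lengths being $2^{b+c_k}$ and the eccentricity $\varepsilon_R = 2^{-\max_k c_k}$.

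The key step is the observation that for fixed $c$, the maximal function
\[
N^{(c)} G(x) := \sup_{b\in\Z}\ \sup_{R \ni x\text{ of aspect } c} |R|^{-1}\!\int_R G
\]
is, under the linear change of variables $y_k = 2^{-c_k} x_k$, conjugate to the classical Hardy--Littlewood maximal operator over cubes in $\R^m$, and therefore satisfies a weak-$(1,1)$ bound with constant $C_m$ depending only on $m$ (and in particular independent of $c$). Since $NG(x) = \sup_c 2^{-\alpha \max_k c_k}\, N^{(c)} G(x)$, I would then apply countable subadditivity of Lebesgue measure:
\[
|\{NG > \mu\}| \le \sum_{c} |\{N^{(c)}G > 2^{\alpha \max_k c_k}\mu\}| \le \frac{C_m\|G\|_1}{\mu}\sum_c 2^{-\alpha\max_k c_k}.
\]

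The only point I expect to be nontrivial, and hence the main obstacle, is ensuring the tail sum over aspect vectors converges. Grouping by $N = \max_k c_k$, the number of admissible $c$ with $\min c_k = 0$ and $\max c_k = N$ is at most $m(N+1)^{m-1}$, so the series reduces to $\sum_{N\ge 0} m(N+1)^{m-1}\,2^{-\alpha N}$, which converges precisely because $\alpha>0$. This is consistent with the paper's remark that the classical strong maximal function ($\alpha=0$) fails the weak-$(1,1)$ bound, so one cannot dispense with the weight $\varepsilon_R^\alpha$. Unwinding the initial substitution $G = |F|^{2^{m-1}}$, $\mu = \lambda^{2^{m-1}}$ then yields the stated inequality with $C = C(m,\alpha)$.
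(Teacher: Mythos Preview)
Your proof is correct and follows essentially the same approach as the paper: both decompose the family of rectangles according to dyadic aspect ratio, reduce each fixed-aspect maximal function to the Hardy--Littlewood maximal function via an anisotropic rescaling, and then sum the resulting weak-type bounds using that $\sum_c \varepsilon_c^{\alpha}<\infty$ for $\alpha>0$. The only cosmetic difference is the parametrization of aspect ratios (the paper normalizes so that the last ratio is zero, allowing the others to range over $\Z$, whereas you normalize so that the minimum is zero, making all entries nonnegative), but the arguments are otherwise identical.
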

\begin{proof}
We denote by $\mathcal Z$ the set consisting of all $k=(k_1,\dots,k_{m})\in \Z^{m}$ with $k_m=0$.
Given $k\in \mathcal Z$, we define 
$
{\mathcal R}_{k}$
to be the set of all 
$I_1 \times \dots \times I_{m} \in \mathcal{R}$ such that
for each $1\leq i \leq m-1$,
$$2^{k_i-1} <\frac{|I_i|}{|I_{m}|} \leq 2^{k_i} .$$ 
We introduce the maximal function
$$
M_{k} F(x) := \sup_{ R \in {\mathcal R}_{k}: x\in R}[F]_R .
$$
We claim the weak type bound 
$$|\{x:M_{k} ({F})(x)>\lambda\}| \le C \lambda^{-2^{m-1}} \|F\|_{2^{m-1}}^{2^{m-1}}
.$$
By a  change of variables
$y=Dx$ if necessary, with diagonal matrix $D$ with entries $2^{k_i-k_m}$, it suffices to show this claim for
$k_1=\dots=k_{m-1}=0$.
As each rectangular box in ${\mathcal R}_{(0,\dots,0)}$ has eccentricity very close to $1$, it is contained in a cubical box of volume no larger than
$2^m$ times the volume of the rectangular box. The claim follows then from the weak type $1$ bound for the standard Hardy-Littlewood
maximal operator, applied to the function $|F|^{2^{m-1}}$.

Define for $k\in \mathcal Z$ 
$$\varepsilon_k:=\frac{\min_{1\le i\le m} 2^{k_i}} {\max_{1\le i\le m} 2^{k_i}}$$
and note that for $R\in {\mathcal R}_k$ we have
$${2^{-1}\varepsilon _k \le \varepsilon_R \le 2 \varepsilon_k.}$$
We also have  
\begin{equation*}
\sum_{k\in \mathcal Z}
\varepsilon_k^{\alpha} <C ,
\end{equation*}
because the exponential decay
of the summand in the length of the vector $k$ beats the polynomial growth of the volume of the balls in $\Z^{m-1}$.
We obtain
$$|\{y:{M}_\alpha ({F})(y)>\lambda\}| 
\le |\{y:\sup_{k\in \mathcal Z} (\varepsilon_k^\alpha)^{2^{1-m}} {M}_k ({F})(y)>C \lambda\}|$$
$$
\le  \sum_{k\in \mathcal Z}
|\{y:(\varepsilon_k^{\alpha})^{2^{1-m}}  {M}_k ({F})(y)>C \lambda \}| $$
$$\le \sum_{k\in \mathcal Z}
C
\varepsilon_k^\alpha
 \lambda ^{-2^{m-1}} \|F\|_{2^{m-1}}^{2^{m-1}}
 \le 
C \lambda ^{-2^{m-1}} \|F\|_{2^{m-1}}^{2^{m-1}}
.$$
\end{proof}

\begin{lemma}[Brascamp-Lieb inequality for $\Pi_j$]
\label{l:brascamplieb}
For any   measurable functions 
$F_j$ on $\R^m$ we have 
\begin{equation*}
\Big|\int_{\R^{2m}} \prod_{j\in \C} F_j(\Pi_j x)\,dx\Big|
\leq  \prod_{j\in \C} \|F_j\|_{2^{m-1}}.
\end{equation*}
\end{lemma}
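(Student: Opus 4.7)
\medskip

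\noindent\textbf{Proof plan.} The plan is to prove the lemma by induction on $m$, singling out the $m$-th coordinate pair $(x_m^0,x_m^1)$ at each step. The base case $m=1$ reduces to Fubini, since then $\C=\{0,1\}$, $\Pi_0 x=x^0$, $\Pi_1 x=x^1$, and the exponent is $2^{1-1}=1$, so the integral factors as $\|F_0\|_1\|F_1\|_1$.

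For the induction step from $m-1$ to $m$, I would first split each $j\in\C$ as $j=(j',j_m)$ with $j'\in\C_{m-1}$ and group the $2^m$ factors according to $j_m$. For fixed values of the first $2(m-1)$ coordinates (collected as vectors $Y_{j'}=(x_1^{j'_1},\dots,x_{m-1}^{j'_{m-1}})$), the factors with $j_m=0$ depend only on $x_m^0$ and those with $j_m=1$ only on $x_m^1$. Thus Fubini factorizes the inner $(x_m^0,x_m^1)$-integral into two independent integrals of $2^{m-1}$ functions each, and Hölder's inequality with $2^{m-1}$ functions at exponent $2^{m-1}$ (whose reciprocals sum to $1$) bounds each such integral by $\prod_{j:j_m=\ell}H_j(Y_{j'})$, where
\[
H_j(y):=\Big(\int_{\R}|F_j(y,t)|^{2^{m-1}}\,dt\Big)^{2^{1-m}}.
\]

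This reduces the left-hand side of the lemma to
\[
\int_{\R^{2(m-1)}}\prod_{j'\in\C_{m-1}}\widetilde H_{j'}(Y_{j'})\,dY,
\qquad \widetilde H_{j'}:=H_{(j',0)}\,H_{(j',1)},
\]
which is precisely the $(m-1)$-dimensional analogue applied to $(\widetilde H_{j'})_{j'\in\C_{m-1}}$. Invoking the induction hypothesis at exponent $2^{m-2}$ yields the bound $\prod_{j'\in\C_{m-1}}\|\widetilde H_{j'}\|_{2^{m-2}}$. The final step is Cauchy--Schwarz on $\R^{m-1}$: since $\widetilde H_{j'}$ is a product of two factors,
\[
\|\widetilde H_{j'}\|_{2^{m-2}}\le \|H_{(j',0)}\|_{2^{m-1}}\,\|H_{(j',1)}\|_{2^{m-1}},
\]
and by Fubini $\|H_j\|_{2^{m-1}}=\|F_j\|_{2^{m-1}}$. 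Multiplying these bounds over $j'\in\C_{m-1}$ recombines into $\prod_{j\in\C}\|F_j\|_{2^{m-1}}$, closing the induction.

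There is no serious obstacle; the only point that must be tracked carefully is the bookkeeping of exponents and dimensions, since at each step the dimension drops by one and the Hölder exponent halves from $2^{m-1}$ to $2^{m-2}$, with the final Cauchy--Schwarz supplying exactly the conversion needed to re-express the induction's output in terms of $L^{2^{m-1}}$-norms of the original $F_j$.
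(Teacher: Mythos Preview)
Your proof is correct and uses the same inductive scheme as the paper, with the same ingredients (Fubini, H\"older/Cauchy--Schwarz, induction on $m$); the only difference is the order in which they are applied. The paper first applies Cauchy--Schwarz in the $2(m-1)$ variables $x_1^0,\dots,x_{m-1}^0,x_1^1,\dots,x_{m-1}^1$, then invokes the induction hypothesis on the squared functions $F_j^2$, and closes with H\"older in the remaining one-dimensional integrals over $x_m^0$ and $x_m^1$; you instead apply H\"older in $x_m^0,x_m^1$ first, then the induction hypothesis on the products $\widetilde H_{j'}$, and close with Cauchy--Schwarz on the resulting $L^{2^{m-2}}$-norms. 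Both orderings yield the sharp constant $1$.
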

\begin{proof}
We induct on $m\geq 1$.  By Fubini, we have
$$\int_{\R^2} |F_0(x^0_1)F_1(x^1_1)| \, dx=\|F_0\|_1\|F_1\|_1,$$
which gives the desired inequality for
$m=1$. For $m>1$, we proceed by Cauchy-Schwarz:
$$
\int_{\R^{2m}} \prod_{j\in \C} |F_j(\Pi_j x)|\,dx=
\int_{\R^{2m}} \prod_{\sigma=0}^1   \prod_{j(m)=\sigma} |F_j(\Pi_j x)|  \,dx
$$
$$
\le
\int_{\R^2}
\prod_{\sigma=0}^1\Big(\int_{\R^{2m-2}} \prod_{j(m)=\sigma} |F_j(\Pi_j x)|^2 \, dx^0_1\dots dx^0_{m-1} \, dx^1_{1}\ldots dx^1_{m-1}\Big)^{1/2} \, dx^0_m\, dx^1_{m}.
$$
Note that the integrals in $x^0_m$ and $x^1_{m}$ split.
By induction, we obtain  an upper bound by
$$
\prod_{\sigma=0}^1\int_{\R}\prod_{j(m)=\sigma} \|F_j^2\|_{L^{2^{m-2}}(x_1,\dots ,x_{m-1})}^{1/2} \, dx_m \leq C \prod_{\sigma=0}^1 \prod_{j(m)=\sigma} \|F_j\|_{2^{m-1}} = C \prod_{j\in \mathcal{C}} \|F_j\|_{2^{m-1}} ,
$$
where we have used the identity $\|F^2\|_{2^{m-2}}=\|F\|_{2^{m-1}}^2$ and H\"older's inequality in the integral over $x_m$.
\end{proof}

\begin{lemma}[Integrability over upper half space]
\label{l:integrable}
The integral in the $(t,p)$ variables in \eqref{form:decomposed} is absolutely integrable for  compactly supported smooth functions $F_j$.
\end{lemma}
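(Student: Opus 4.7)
The plan is to split the $(t,p)$-integral into the regimes $0 < t \leq 1$ and $t \geq 1$ and bound each separately. After the substitution $y = x - Jp + ut$ the inner integral becomes $I(t,p) = \int_{\R^{2m}} h(y) \psi_t(y)\,dy$ with $\psi := \partial_i^0 \partial_i^1 g$ and $h(y) := \prod_{j \in \C} F_j(\Pi_j y + A_j p - \Pi_j u t)$. Two structural ingredients drive the estimate. First, $\psi$ is Schwartz on $\R^{2m}$ with $\int y^\beta \psi(y)\,dy = 0$ for every multi-index $\beta$ with $|\beta| \leq 1$, by integration by parts applied to the mixed second derivative. Second, the $\Delta$-regularity of $A_0$ provides $|Jp - ut| \geq c_\Delta |p - p^*|$ for a suitable $p^* = p^*(u,t)$, so a Schwartz tail in the variable $Jp - ut$ translates into an integrable tail in $p$ with natural scale $t$ and yields a factor $t^m$ on integration in $p$.

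For $0 < t \leq 1$ I will Taylor-expand $h$ around $y = 0$ to second order; the vanishing moments of $\psi_t$ annihilate the constant and linear terms, yielding $|I(t,p)| \lesssim t^2 \|D^2 h\|_\infty \lesssim t^2 \prod_j \|F_j\|_{C^2}$ uniformly in $p$. This crude bound is sufficient on the region $|Jp - ut| \lesssim R$, where $R$ is a common support radius of the $F_j$, and whose $p$-volume is $O(1)$ by regularity of $A_0$. On the complementary region the origin lies outside $\supp h$, so $h(0) = 0$ and $\nabla h(0) = 0$, and I instead bound $I(t,p)$ directly by $\|h\|_\infty \cdot |\supp h| \cdot \sup_{y \in \supp h} |\psi_t(y)|$, exploiting Schwartz decay of $\psi_t$ away from the origin to extract arbitrary polynomial decay in $|Jp - ut|/t$. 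Both contributions integrate in $p$ to something of size $O(t^2)$, which is integrable against $dt/t$ on $(0,1]$.

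For $t \geq 1$ the cancellation is no longer needed: using $\|\psi_t\|_\infty \leq C t^{-2m}$, the volume bound $|\supp h| \lesssim R^{2m}$, and Schwartz decay of $\psi_t$, I obtain $|I(t,p)| \lesssim R^{2m} t^{-2m}(1 + |Jp - ut|/t)^{-N}$ for any $N$. Integration in $p$ using the regularity of $A_0$ then produces $\int_{\R^m}|I(t,p)|\,dp \lesssim t^{-m}$, which is integrable against $dt/t$ on $[1,\infty)$ since $m \geq 2$. I expect the main obstacle to be the small-$t$ analysis: the Taylor bound alone does not produce any $p$-decay, so one must carefully combine it with the direct Schwartz-tail estimate on the complementary region where the Taylor polynomial at $y = 0$ vanishes identically, in order to obtain a bound that is simultaneously integrable in $p$ and summable down to $t = 0$ against $dt/t$.
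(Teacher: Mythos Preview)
Your argument is correct. The two regimes are handled soundly: the Taylor/moment cancellation gives the $t^2$ gain on the bounded $p$-region, and the Schwartz tail of $\psi_t$ provides the $p$-integrability on the complement, with the lower bound $|Jp-ut|\ge c_\Delta|p-p^*|$ coming from $A_0=\Pi_0 J=J^0$ being $\Delta$-regular. The large-$t$ estimate is straightforward.

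Your route differs from the paper's in two respects. For large $t$ the paper does not use the compact support of the $F_j$ directly; instead it applies the Brascamp--Lieb inequality (Lemma~\ref{l:brascamplieb}) to control the $x$-integral by $\prod_j\|F_j\|_{2^{m-1}}$ and then observes $\sup_x\int_{\R^m}|(\partial_i^0\partial_i^1 g)_t(x-Jp+ut)|\,dp\le Ct^{-m}$. For small $t$ the paper integrates by parts in $x$ to move $\partial_i^0\partial_i^1$ onto $\prod_j F_j(\Pi_j x)$ (the dual of your Taylor/moment step), and then, rather than splitting $p$ into near and far regions, it writes $\R^{2m}=V\oplus V^\perp$ with $V=\ker\Pi^J=J\R^m$ and combines the $p$-integral with the $V^\perp$-integral into a single bounded Gaussian integral, leaving a compactly supported integral over $V$. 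Your approach is more self-contained---it needs neither Lemma~\ref{l:brascamplieb} nor the $V\oplus V^\perp$ splitting---at the cost of the extra near/far case analysis. The paper's version has the minor advantage that the large-$t$ bound depends only on $\prod_j\|F_j\|_{2^{m-1}}$ rather than on the support radius, though this is irrelevant for the lemma as stated. The remark that $h(0)=0$ and $\nabla h(0)=0$ on the far region is correct but not actually used in your argument; you may drop it.
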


\begin{proof}
For fixed $t$, the integral 
\begin{equation}\label{e:fixt}\int_{\R^{m}}\Big | \int_{\R^{2m}}  \Big ( \prod_{j\in \C} F_j(\Pi_j x)\Big )  (\partial^0_i \partial^1_{i}g)_t(x-Jp + ut) \,  dx \Big | \, dp  \end{equation}
can be estimated with Fubini and  Lemma \ref{l:brascamplieb} by
$$\leq C \Big (\prod_{j\in \C} \|F_j\|_{2^{m-1}}\Big )
\sup_{x\in \R^{2m}}\int_{\R^m} |(\partial^0_i \partial^1_{i}g)_t(x-Jp + ut) |\, dp .$$
The  supremum in $x$ is bounded by $Ct^{-m}$,
which makes \eqref{e:fixt} absolutely integrable over the half line $t>1$ with measure $dt/t$.
For $t<1$, a partial integration on the $x$ integral estimates 
\eqref{e:fixt}
by
$$C t^2\int_{\R^m}\int_{\R^{2m}}  |\partial^0_i \partial^1_{i} \Big ( \prod_{j\in \C} F_j(\Pi_j x)\Big ) | g_t(x-Jp + ut)  \, dx \, dp.$$
We split $ \R^{2m}$
as the direct sum of the kernel $V$ of $\Pi^J$ and its complement. We then estimate
\eqref{e:fixt} with Fubini by
$$ t^2\int_{V} \int_{V^\perp} |\partial^0_i \partial^1_{i} \Big ( \prod_{j\in \C} F_j(\Pi_j (y+z))\Big )  | \int_{\R^m}g_t(y+z-Jp + ut)\, dp \, dy \, dz$$
$$\le  t^2\int_{V}  \sup_{y\in V^\perp}  |\partial^0_i \partial^1_{i} \Big ( \prod_{j\in \C} F_j(\Pi_j (y+z))\Big )  | \Big (\int_{y\in V^\perp}\int_{\R^m}g_t(y+z-Jp + ut) \, dp \, dy \Big )\, dz.$$
The Gaussian integral is bounded by a constant independent of $t$, and the remaining integral is finite since $V^\perp$ is orthogonal to $J\R^m$. Therefore,   the whole last display is bounded by $Ct^2$. This can be integrated over $t<1$ against the measure $dt/t$.
\end{proof}

\begin{lemma}[Gaussian estimate]
\label{l.gauss}
With notation as in Theorem \ref{maint},
 for every $x\in \R^{2m}$, $t>0$ and $p,p'\in \R^m$  with $|p'- p|<2^{m+3}t$, we have
\begin{equation}\label{e:ge}
\Big|\frac{1}{1+|u|^{4m}}
 (\partial^0_i \partial^1_{i}g)_t(x-Jp+ut)\Big|\le Ct^{-2m} \sum_{n\ge 0} 2^{-4nm}\chi(|x-Jp'|\le 2^{n}t)
 \end{equation}
 with the indicator $\chi(\mathcal{A})$ that is $1$ if $\mathcal{A}$ is satisfied and $0$ otherwise.
\end{lemma}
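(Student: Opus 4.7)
The plan is to reduce the estimate to the pointwise Gaussian bound on $(\partial^0_i \partial^1_i g)_t$ and analyze the right-hand side as a dyadic polynomial envelope in the variable $r := |x-Jp'|/t$.

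First, since $g(y)=e^{-\pi|y|^2}$ is Schwartz, a direct computation yields $\partial^0_i \partial^1_i g(y) = 4\pi^2 y^0_i y^1_i e^{-\pi|y|^2}$, which satisfies the pointwise bound $|\partial^0_i \partial^1_i g(y)| \le C e^{-c|y|^2}$ for some absolute $c>0$. Rescaling,
$$\bigl|(\partial^0_i \partial^1_i g)_t(y)\bigr| \le C t^{-2m} e^{-c|y|^2/t^2}.$$
Next, observe that the sum on the right-hand side of \eqref{e:ge} is essentially a smooth version of the envelope $(1+r)^{-4m}$: indeed, the indicator is nonzero precisely when $n\ge \max(0,\lceil \log_2 r\rceil)$, and summing the geometric series gives $\sum_{n\ge 0} 2^{-4nm}\chi(r\le 2^n) \ge c' (1+r)^{-4m}$ for an absolute constant $c'>0$. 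So it suffices to prove that the left-hand side of \eqref{e:ge} is bounded by $C t^{-2m}(1+r)^{-4m}$.

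Now I would handle the shift $p\mapsto p'$. Writing $x-Jp+ut = (x-Jp') + J(p'-p)+ut$ and using $|p'-p|<2^{m+3}t$ together with the $\Delta$-regularity of $A_0=J^0$ and $A_1=J^1$ (which forces $\|J\|\le C(\Delta)$), we obtain $|J(p'-p)|\le C_1 t$ for some $C_1=C_1(\Delta,m)$. Hence
$$|x-Jp+ut|/t \ge r - C_1 - |u|.$$
I would split into two cases. If $|u|\ge r/(4C_1+4)$ (roughly $r$-large relative to $u$), then the polynomial factor $1/(1+|u|^{4m})$ alone dominates by $C(1+r)^{-4m}$, and I bound the Gaussian by $1$. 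If instead $|u|<r/(4C_1+4)$, then either $r$ is bounded by a constant, in which case $(1+r)^{-4m}\ge c''$ and the trivial bound $t^{-2m}$ suffices; or $r$ is large enough that $|x-Jp+ut|/t \ge r/2$, and the Gaussian decay $e^{-cr^2/4}$ dominates any polynomial, in particular $(1+r)^{-4m}$.

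Combining the two cases yields the required envelope $Ct^{-2m}(1+r)^{-4m}$, and hence \eqref{e:ge}. I expect no real obstacle here beyond bookkeeping of the constants coming from the $\Delta$-regularity of $J^0,J^1$; the argument is a standard Schwartz tail analysis, the only subtlety being that we must absorb the translation by $J(p'-p)$ and the drift $ut$ into either the Gaussian decay or the polynomial factor $(1+|u|^{4m})^{-1}$, according to which of $|u|$ or $r$ dominates.
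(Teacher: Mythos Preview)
Your proof is correct and follows essentially the same approach as the paper's. The only organizational difference is that you first recognize the dyadic sum on the right as comparable to the envelope $(1+r)^{-4m}$ with $r=|x-Jp'|/t$, and then run the case split on the relative sizes of $|u|$ and $r$; the paper instead works directly with the dyadic sum, picking a specific index $n$ with $2^{n-1}<r\le 2^n$ and splitting on $|u|\gtrless 2^{n-3}$, but the substance of the two arguments is identical.
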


\begin{proof}
Assume first the case
\begin{equation}\label{e:sxjp}
t^{-1} |x-Jp'|\le 8 +2^{m+5}|J|
\end{equation}
with the operator norm $|J|$ of $J$. We estimate the left-hand side of 
\eqref{e:ge} by the quantity
$t^{-2m}\|\partial^0_i\partial^1_{i}g\|_\infty$,
which for suitable $C$ is bounded by
the single term on the right-hand side 
with the smallest index $n$, for which    the right-hand side of \eqref{e:sxjp} is less than $2^n$. 

It remains to consider the case that $\eqref{e:sxjp}$ is not satisfied. Pick $n\ge 4$
such that
$$2^{n-1}< t^{-1} |x-Jp'|\le 2^n .$$
Considering only the $n$-th term on the right-hand side of \eqref{e:ge}, it  suffices to show
\begin{equation}\label{e:lxjp}\Big|\frac{1}{1+|u|^{4m}}
 \partial^0_i \partial^1_{i}g(t^{-1}(x-Jp)+u)\Big|\le C2^{-4nm}. 
 \end{equation}
If $|u|>2^{n-3}$, this follows from a bound for $\|\partial^0_i\partial^1_{i}g\|_\infty$. If $|u|\le 2^{n-3}$, we have
$$|t^{-1}(x-Jp)+u|\ge
|t^{-1}(x-Jp')|-2^{m+3}|J|-2^{n-3}\ge 2^{n-3},$$
where we used the assumed bound on $p-p'$ and estimated  $|J|$ by the assumed reverse inequality to \eqref{e:sxjp}.
Estimate \eqref{e:lxjp}  then follows from the Schwartzian bound,
$$| \partial^0_i\partial^1_{i}g(y)|\le C|y|^{-4m}$$
for $|y|\ge 1$.
 \end{proof}

\begin{lemma}[Off-diagonal estimate for a tree]\label{L:maximal_function}
Let  $\Delta>1$. There is a constant $C>0$ such that for
 $J$, $A_j$ as in Theorem~\ref{mains},
for any convex tree $\T$
and any tuple $(F_j)_{j\in \C}$ of compactly supported bounded measurable  functions on $\R^m$ such that 
for some $j_0\in \C$, the support of $F_{j_0}\circ A_{j_0}$ is
disjoint from $Q_\T^*$, we have
\begin{equation*}
|\Lambda_\T((F_j)_{j\in \C})| \le C  |Q_\T| \prod_{j\in \C} \sup_{Q\in {\mathcal Q}: Q_\T^*\subseteq Q^*} [F_j\circ A_j]_{Q^*}.
\end{equation*}
\end{lemma}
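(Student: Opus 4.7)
The plan is to exploit Gaussian decay to replace the smooth kernel $(\partial_i^0\partial_i^1 g)_t$ by indicators of balls, apply the Brascamp--Lieb inequality (Lemma \ref{l:brascamplieb}) on each ball, and use the support hypothesis on $F_{j_0}$ to force a large radius in every non-vanishing contribution.

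Concretely, I would first invoke Lemma \ref{l.gauss} with $p'=p$, which gives the pointwise bound
\[\Big|\frac{(\partial_i^0\partial_i^1 g)_t(x-Jp+ut)}{1+|u|^{4m}}\Big|\le Ct^{-2m}\sum_{n\ge 0}2^{-4nm}\chi(|x-Jp|\le 2^n t).\]
For each $n$ and $(p,t)\in\Omega_{\T}$, the fact that $\Pi_j$ has operator norm $1$ gives $\chi(|x-Jp|\le 2^n t)\le \prod_{j\in\C}\chi(|\Pi_j x - A_j p|\le 2^n t)$, and Lemma \ref{l:brascamplieb} then yields
\[\int_{|x-Jp|\le 2^n t}\prod_{j\in\C}|F_j(\Pi_j x)|\,dx\le \prod_{j\in\C}\|F_j 1_{B(A_j p,\,2^n t)}\|_{2^{m-1}}.\]
A change of variables $y=A_j z$ together with $\Delta$-regularity of $A_j$ shows that each norm is controlled by $C|Q^*|^{1/2^{m-1}}[F_j\circ A_j]_{Q^*}$ for any dyadic cube $Q$ satisfying $Q^*\supseteq B(p,C2^n t)\cup Q_{\T}^*$. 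Such a $Q\supseteq Q_{\T}$ can be chosen with side length $\le C\max(\ell(Q_{\T}),2^n t)$, and since there are $2^m$ factors with exponent $1/2^{m-1}$ each, the product over $j$ is bounded by $C\max(\ell(Q_{\T}),2^n t)^{2m}\prod_{j\in\C}s_j$, where $s_j$ denotes the supremum appearing in the statement.

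The decisive step is the use of the support hypothesis. Writing $A_{j_0}^{-1}\Pi_{j_0}x=p+A_{j_0}^{-1}\Pi_{j_0}(x-Jp)$ and using that $p\in Q_{\T}$ lies at distance at least $\ell(Q_{\T})$ from $(Q_{\T}^*)^c$ while $\supp(F_{j_0}\circ A_{j_0})\subseteq (Q_{\T}^*)^c$, a non-zero $F_{j_0}(\Pi_{j_0}x)$ forces $|A_{j_0}^{-1}\Pi_{j_0}(x-Jp)|\ge \ell(Q_{\T})$ and hence $|x-Jp|\ge c\ell(Q_{\T})$ by $\Delta$-regularity of $A_{j_0}$. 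Only terms with $2^n t\ge c\ell(Q_{\T})$ contribute, and splitting the sum by whether $2^n t\le \ell(Q_{\T})$ or $2^n t>\ell(Q_{\T})$ and summing each geometric tail gives
\[\sum_{n:\,2^n t\ge c\ell(Q_{\T})} 2^{-4nm}\max(\ell(Q_{\T}),2^n t)^{2m}\le Ct^{4m}\ell(Q_{\T})^{-2m}.\]

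Multiplying by the prefactor $t^{-2m}$ and integrating over $\Omega_{\T}$ reduces the estimate to
\[C\ell(Q_{\T})^{-2m}\Big(\prod_{j\in\C}s_j\Big)\int_{\Omega_{\T}}t^{2m-1}\,dp\,dt.\]
The last integral equals a constant multiple of $\sum_{Q\in\T}|Q|\ell(Q)^{2m}$; organizing by tree level and using that cubes of the same level in a convex tree are disjoint subsets of $Q_{\T}$, the resulting geometric series in $\ell(Q)$ is dominated by $C|Q_{\T}|\ell(Q_{\T})^{2m}$, which cancels the $\ell(Q_{\T})^{-2m}$ factor and gives the claim. The main obstacle is the matching of the Gaussian decay rate $2^{-4nm}$ with the polynomial growth $\max(\ell(Q_{\T}),2^n t)^{2m}$ above the threshold imposed by the support hypothesis; once this delicate cancellation is extracted, the remaining tree-level summation is routine.
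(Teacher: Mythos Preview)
Your proof is correct and follows essentially the same approach as the paper: use Lemma~\ref{l.gauss} to reduce to indicator functions of balls, apply the Brascamp--Lieb inequality of Lemma~\ref{l:brascamplieb}, invoke the support hypothesis on $F_{j_0}$ to force $2^n t\gtrsim \ell(Q_\T)$, and then sum the resulting geometric series over $n$ and over tree levels. The only cosmetic difference is that the paper organizes the estimate cube-by-cube over $Q'\in\T$ (applying Lemma~\ref{l.gauss} with $p'=\mathfrak{c}(Q')$) and then sums over $Q'$, whereas you keep the full integral over $\Omega_\T$ with $p'=p$ and pass to tree levels only at the very end; the arithmetic is the same.
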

Here $\mathcal Q$ denotes the set of all dyadic cubes
in $\R^m$.

\begin{proof}
Let $Q'\in \T$. Using Lemmas \ref{l.gauss} and   \ref{l:brascamplieb} we estimate
 \begin{equation}\label{e:indq} \Big|\int_{l(Q')/2}^{l(Q')}
 \int_{Q'}
  \frac{c(t)}{1+|u|^{4m}}
   \int_{\R^{2m}}   \Big ( \prod_{j\in \C} F_j(\Pi_j x)\Big )
 (\partial^0_i \partial^1_{i}g)_t(x-Jp+ut)  \, dx \, dp \, \frac{dt}{t} \Big |
 \end{equation}
 $$ \le C|Q'|\sup_{l(Q')/2\le t\le l(Q')}   \int_{\R^{2m}}   \Big ( \prod_{j\in \C} |F_j(\Pi_j x)|\Big )
 t^{-2m} 
\sum_{n\ge 0} 2^{-4nm}\chi(|x-J{\mathfrak c}(Q')|\le 2^{n}t) \,
dx$$
$$ \le C  \sum_{n\ge 0} 2^{-4nm} |Q'|^{-1}\int_{\R^{2m}}    \Big ( \prod_{j\in \C} |F_j(\Pi_j x)|
\chi(|\Pi_j x-A_j \mathfrak c(Q')| \le 2^n{l(Q')} ) \Big ) \,
dx$$
$$ \le C  \sum_{n\ge 0} 2^{-4nm} |Q'|^{-1}\prod_{j\in \C} 
\|F_j(y)\chi(|y-A_j {\mathfrak c}(Q')|\le 2^n{l(Q')})
\|_{L^{2^{m-1}}(y)}.
$$
Here ${\mathfrak c}(Q)$ denotes the center
of the cube $Q$.

For the summation indices $n$ with 
$|A_{j_0}^{-1}| 2^nl(Q')<l(Q_\T)$, we see that the factor with $F_{j_0}$ vanishes because  $F_{j_0}\circ A_{j_0}$ is supported outside $Q_\T^*$ while $\mathfrak c(Q')$ is inside $Q_\T$.
Thus, it remains to sum over $n$ with $|A_{j_0}^{-1}| 2^nl(Q')\geq l(Q_\T)$. We estimate
$$\|F_j(y)
\chi(|y-A_j \mathfrak c(Q')|\le 2^n{l(Q')})
\|_{L^{2^{m-1}}(y)}$$
$$\le C2^{nm2^{1-m}}|Q'|^{2^{1-m}}
\sup_{Q\in {\mathcal Q}: Q^*_\T\subseteq Q^*} [F_j\circ A_j]_{Q^*}.
$$
In the last inequality  we used that $\mathfrak c(Q')\in Q_\T$ and 
that averages are over boxes of volume comparable or larger in volume than $Q_\T^*$. 

We continue the estimate of \eqref{e:indq} with 
$$ \le C  \sum_{|A_{j_0}^{-1}| 2^nl(Q')\ge l(Q_\T)} 2^{-4nm} 2^{2nm} |Q'|\prod_{j\in \C}  \sup_{Q\in {\mathcal Q}: Q_\T^*\subseteq Q^*} [F_j\circ A_j]_{Q^*}
$$
$$ \le C |Q'| \frac{|Q'|}{|Q_\T|} \prod_{j\in \C} \sup_{Q\in {\mathcal Q}: Q_\T^*\subseteq Q^*} [F_j\circ A_j]_{Q^*}.
$$
Summing \eqref{e:indq} over all $Q'\in \T$ and using 
$$ \sum_{Q'\in \T}|Q'| \Big(\frac{|Q'|}{|Q_\T|}\Big)  = \sum_{k\leq 0}2^{mk} \sum_{Q'\in \T: l(Q')=2^{k}l(Q_{\T})}|Q'|
\le \sum_{k\leq 0}2^{mk}|Q_\T| \leq C |Q_\T|
$$
proves the lemma.
\end{proof}

\section{Proof of Theorem \ref{maint}}

\label{S:morered}

We present the proof of Theorem 
\ref{maint} using Theorem \ref{mainx}, several lemmas from the previous section and one from the end of this section.
With notation as in Theorems \ref{maint} and 
\ref{mainx}, we
need to prove 
\begin{equation}\label{e:lambdadiff}
| \wt{\Lambda}_{\mathcal{T}}((F_j)_{j\in \C}) - {\Lambda}_{\mathcal{T}}((F_j)_{j\in \C}) |
 \leq C  |Q_{\T}| \prod_{j\in \C} {M}_\alpha(F_j)(A_j\T).
\end{equation}
We estimate the left-hand side of \eqref{e:lambdadiff}
with Lemma \ref{l.gauss} by
\begin{equation}\label{E:error1}
\sum_{n\ge 0}2^{-4mn}\sum_{k\in \Z} \int_{2^{k-1}}^{2^k} \int_{T_k}\int_{S_k^c} \Big( \prod_{j\in \C} |F_j(\Pi_j x)| \Big)
t^{-2m} \chi(|x-Jp|\le 2^{n}t)
\,dx\,dp\,\frac{dt}{t}
\end{equation}
\begin{equation}\label{E:error2}+ 
\sum_{n\ge 0}2^{-4mn}\sum_{k\in \Z} \int_{2^{k-1}}^{2^k} \int_{T_k^c}\int_{S_k} \Big( \prod_{j\in \C} |F_j(\Pi_j x)| \Big)
t^{-2m} \chi(|x-Jp|\le 2^{n}t)
\,dx\,dp\,\frac{dt}{t},
\end{equation}
where $S_k$ is the set of all $x\in \R^{2m}$ such that $\Pi_jx\in A_j T_k$ for all $j\in \C$.

Fix $n$ and $k$ and first estimate the corresponding summand in \eqref{E:error1} by

$$ C 2^{-2mk} \int_{T_k}\int_{S_k^c} \Big( \prod_{j\in \C} |F_j(\Pi_j x)| 
 \chi(|\Pi_j x-A_j p|\le 2^{n}2^k)\Big)
\,dx\,dp.$$
Let $E$ be the set of $p\in T_k$ such that the inner integral of the last display is not zero. Then we estimate the last display with the Brascamp-Lieb inequality from Lemma~\ref{l:brascamplieb} by
$$\le C 2^{-2mk} \int_{ E}  \prod_{j\in \C} \|F_j( y) 
 \chi(|y-A_j p|\le 2^{n}2^k)\|_{L^{2^{m-1}}(y)}
\,dp$$
\begin{equation}\label{e:error1int}
\le C 2^{2mn} |E| \prod_{j\in \C} 
{M}_\alpha(F_j)(A_j\T).
\end{equation}
Here the last inequality followed by identifying the 
$L^{2^{m-1}}$ norm of the truncated function $F_j$
as an average over a cube  of side length $2^{n}2^k$ about the point $A_jp$, which is contained in a set $A_jQ^*$
of comparable volume for a $Q$ containing a cube $Q'\in \T$.

We estimate $|E|$. If $p\in E$, then there is $x\in S_k^c$ such that for all $j\in \C$ we have
$$|\Pi_j x-A_j p|\le 2^{n}2^k.$$
By definition of $S_k$, there is a $j_0\in \C$ such that
$\Pi_{j_0} x\not \in A_{j_0} T_k$.
Let $Q_p$ be a dyadic cube of side length  $2^k$ containing $p$ and let $Q_x$ be a dyadic cube of side length $2^k$ such that
$\Pi_{j_0} x \in A_{j_0} Q_x$.
Then $Q_p\subseteq T_k$ and $Q_x \not \subseteq T_k$.
But both $A_{j_0}Q_p$ and $A_{j_0}Q_x$ are contained in the ball $B$ of radius
$C2^n2^k$ about $A_{j_0}p$ for 
sufficiently large $C>1$ depending on $\Delta$.
Hence there is
$$y\in \partial T_k \cap (2^k \Z)^m$$
such that $y\in A_{j_0}^{-1} B$. But then
$p$ is contained in the ball of radius $C2^n2^k$ about $y$. Hence
$$|E|\le C2^{nm} 2^{km} \#(\partial T_k \cap (2^k \Z)^m).$$
Using this estimate, 
\eqref{e:error1int}, and summing the geometric
series in $n$, we obtain
for \eqref{E:error1} the upper bound
$$\leq C\Big( \sum_{k \in \Z}2^{km} 
\#(\partial T_k \cap (2^k \Z)^m) \Big )\prod_{j\in \C} 
{M}_\alpha(F_j)(A_j\T).$$
With Lemma \ref{L:boundary}
we obtain the desired bound for \eqref{E:error1}.

It remains to estimate \eqref{E:error2}.
Fix again  $n$ and $k$ and estimate the corresponding summand by
$$ C 2^{-2mk} \int_{T_k^c}\int_{S_k} \Big( \prod_{j\in \C} |F_j(\Pi_j x)| 
 \chi(|\Pi_j x-A_j p|\le 2^{n}2^k)\Big)
\,dx\,dp.$$
Let $E$ be the set of $p\in T_k^c$ such that the inner integral of the last display is not zero. Then we estimate the last display with the Brascamp-Lieb inequality from Lemma \ref{l:brascamplieb} by
\begin{equation}
    \label{error2bl}
    \le C 2^{-2mk} \int_{ E}  \prod_{j\in \C} \|F_j( y) 
 \chi(|y-A_j p|\le 2^{n}2^k)\|_{L^{2^{m-1}}(y)}
\,dp.
\end{equation}
If $p\in E$, then there is $x\in S_k$ such that for all $j\in \C$ we have
$$|\Pi_j x-A_j p|\le 2^{n}2^k.$$
By definition of $S_k$, for every $j\in \C$ there is a $p_j\in T_k$ such that
$\Pi_jx=A_jp_j$.
Using the triangle inequality, we may estimate  \eqref{error2bl} by
$$
    \le C 2^{-2mk} \int_{ E}  \prod_{j\in \C} \|F_j( y) 
 \chi(|y-A_j p_j|\le 2^{n+1}2^k)\|_{L^{2^{m-1}}(y)}
\,dp
$$
\begin{equation*}
\le C 2^{2mn} |E| \prod_{j\in \C} 
{M}_\alpha(F_j)(A_j\T).
\end{equation*}
Here we may argue in the last inequality as for \eqref{e:error1int}, because $p_j\in T_k$.
We may also argue as before that
the ball of radius $C2^{n+1} 2^k$ about $p$ contains $p_j$ and we may argue as before that it also contains a point 
in $\partial T_k \cap (2^k \Z)^m$.
We may estimate as before
$$|E|\le C2^{nm} 2^{km} \#(\partial T_k \cap (2^k \Z)^m)$$
and conclude the desired bound for 
\eqref{E:error2}.
This concludes the proof of Theorem \ref{maint}.

\begin{lemma}\label{L:boundary}
There is a constant $C>0$ such that for any convex tree $\T$ in $\R^m$, 
$$
\sum_{k\in \Z} 2^{mk} \# (\partial T_k \cap (2^k \Z)^m) \le C |Q_\T|.
$$
\end{lemma}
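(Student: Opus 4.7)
The plan is to count the lattice vertices $V_k := \partial T_k \cap (2^k \Z)^m$ by charging each such vertex either to a boundary point of $Q_\T$ or to a boundary point of a maximal dyadic subcube of $Q_\T$ that lies outside $\T$; both contributions will then sum, after multiplication by $2^{km}$, to a bounded multiple of $|Q_\T|$ via geometric series.

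First I observe that for $y \in V_k$, the point $y$ is a shared vertex of the $2^m$ dyadic cubes of side $2^k$ having $y$ as a corner, and among these at least one cube $Q$ lies in $\T$ and at least one cube $Q'$ does not. Since $T_k$ is empty when $2^k > l(Q_\T)$, one may assume $2^k \le l(Q_\T)$, and dyadic nesting then forces one of two alternatives: either (B) $Q' \cap Q_\T = \emptyset$, in which case $y \in \partial Q_\T$; or (A) $Q' \subsetneq Q_\T$ and $Q' \notin \T$.

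In Case B the number of points of $(2^k \Z)^m$ on $\partial Q_\T$ is $O\big((l(Q_\T)/2^k)^{m-1}\big)$ by a standard face-area count, and multiplying by $2^{km}$ and summing the resulting geometric series over $k$ with $2^k \le l(Q_\T)$ contributes at most $C|Q_\T|$. In Case A I introduce the family $\mathcal{M}$ of maximal dyadic cubes $M \subsetneq Q_\T$ with $M \notin \T$; by maximality and dyadic nesting these cubes are pairwise disjoint, whence $\sum_{M \in \mathcal{M}} |M| \le |Q_\T|$. The cube $Q'$ from Case A is contained in a unique $M \in \mathcal{M}$, and $l(M) \ge 2^k$. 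The key claim is $y \in \partial M$: by upward closure of the convex tree $\T$, $M \notin \T$ forces $Q \not\subseteq M$, and dyadic nesting together with $l(Q) \le l(M)$ then yields $Q \cap M = \emptyset$, while $y \in \bar{Q} \cap \bar{M}$, so $y \in \partial M$. For each $M$ with side $2^{k'}$ a face-area count gives $\#(\partial M \cap (2^k \Z)^m) \le C \cdot 2^{(k'-k)(m-1)}$ for $k \le k'$, so the contribution of $M$ to the sum in the lemma is
$$\sum_{k \le k'} 2^{km} \cdot C \cdot 2^{(k'-k)(m-1)} = C \cdot 2^{k'(m-1)} \sum_{k \le k'} 2^k = C \cdot 2^{k' m} = C|M|,$$
and summing over $\mathcal{M}$ yields $C|Q_\T|$.

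The main obstacle is the Case A step: setting up the charging $y \mapsto M$ correctly, verifying $y \in \partial M$ via the convexity (upward closure) of $\T$, and ensuring that the per-$M$ geometric sum in $k$ collapses to $|M|$ rather than something larger. Once these are in place, the Case B bound is routine and the two contributions combine to give the claimed inequality.
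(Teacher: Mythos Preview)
Your proof is correct and follows essentially the same argument as the paper: charge each boundary vertex to a maximal dyadic cube not in $\T$, show the vertex lies on the boundary of that cube via the convexity of $\T$, and sum a geometric series over scales. The only cosmetic difference is that the paper works with maximal non-tree cubes inside the triple cube $Q_\T^*$ rather than inside $Q_\T$, which absorbs your Case B into the same framework and avoids the separate $\partial Q_\T$ count.
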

\begin{proof}
Let   $\mathcal{Q}$ be the collection of maximal cubes $Q\subseteq Q^*_\T$ such that $Q\not \in \T$.
As these cubes are pairwise disjoint and contained in $Q^*_\T$, we have
$$\sum_{Q\in {\mathcal Q}}|Q|\le C |Q_\T|.$$
For each $Q\in {\mathcal Q}$ let
${\mathcal Q}_Q$ be the set of all cubes
$Q'\subseteq Q$ such that the boundary of $Q'$ intersects the boundary of $Q$. 
With $k$ such that $l(Q)=2^k$, we have
$$\sum_{Q'\in  {\mathcal Q}_Q}|Q'|
\le
\sum_{k'\le k} \sum_{Q'\in  {\mathcal Q}_Q: l(Q')=2^{k'}}|Q'|\le C\sum_{k'\le k}2^{k'-k}
|Q|\le C|Q|,$$
  where we counted the cubes $Q'$ of fixed side length by using that they are contained in a thin layer around the boundary of the cube $Q$.

For each point $p\in \partial T_k \cap (2^k \Z)^m$, there is a cube $Q_{p,k}$ of side length $2^k$ which is adjacent to $p$
and element of ${\mathcal Q}_Q$ for some
$Q\in \mathcal{Q}$. Conversely, each such cube
$Q_{p,k}$ is equal to some other cube $Q_{p',k'}$ only for $k'=k$ and  boundedly many points  
$p'\in \partial T_k \cap (2^k \Z)^m$.
We obtain
$$\sum_{k\in \Z} 2^{mk} \# (\partial T_k \cap (2^k \Z)^m) \le 
C\sum_{Q\in {\mathcal Q}}\sum_{Q'\in {\mathcal Q}_Q} |Q'|
\le 
C\sum_{Q\in {\mathcal Q}} |Q|\le 
C |Q_\T|.$$
\end{proof}

\section{Proof of Theorem \ref{mainx}}
\label{S:general}
The cube $\C$ has natural reflection symmetries.
We formulate in
Proposition \ref{p:induction} a statement, which can be proven by downward induction on a parameter $l$ counting the reflection symmetries that preserve the value of $\Lambda((F_j)_{j\in \C})$ for a given form $\Lambda$ and given functions $F_j$. This proposition, applied to the case of no symmetry assumption, implies Theorem \ref{mainx}.

The induction consists of three by now well established key arguments. Symmetrization of the form is achieved by the Cauchy-Schwarz inequality
as from \eqref{beforecs} to \eqref{aftercs0}. One breaks the cube into two halves  along
a reflection symmetry, and 
Cauchy-Schwarz symmetrizes the two pieces.

It is important that the precious cancellation of the Calder\'on-Zygmund kernel $K$ 
is preserved through this Cauchy-Schwarz by being moved to bump functions associated with the two faces that are not cut by the symmetry plane. 
This is second crucial ingredient, often done by partial integration or algebra, moving the cancellation around the faces.
Here, due to the general projection $\Pi$, one has to work with cone decompositions \eqref{decrsg} in addition to algebra \eqref{e:lastdisplay}
and these decompositions require careful control of the geometry and dependence of various parameters, done mostly in 
Section \ref{S:reexpansion}.

The third ingredient, to avoid that pieces already symmetrized will reappear in later steps, is a positivity argument. All the previously and the newly symmetrized pieces appear as positive terms \eqref{e:positivity} of a sum  and it suffices to estimate the sum rather than the individual pieces.

Define for a convex tree $\T$, for an injection $J:\R^{m}\to \R^{2m}$, and for a function 
$$K:(0,\infty)\times \R^{2m} \to {\mathbb C}$$ 
that is measurable in the first variable $t\in(0,\infty)$ 
such that $x\to t^{m}K(t,tx)$ is
Schwartz with integral zero and
Schwartz norm bounds uniform in $t$,  the form
\begin{equation}\label{deflambda4}
\Lambda(\T,J,K)((F_j)_{j\in \C})=
\sum_{k\in \Z}   \int_{\R^{2m}}  \Big(\prod_{j\in \C} (F_j 1_{(A_jT_k)})(\Pi_jx) \Big) \int_{2^{k-1}}^{2^k}
K(t, x) \,
\frac{dt}{t} \,dx.
\end{equation}
If $1\leq i \leq m$ and $j\in \C$, we define the reflected corner $i\ast j \in \C$ by determining that  $(i\ast j)(k)$ coincides 
with $j(k)$ at all positions $k$ except at the position $k=i$, where it differs.
Define for a function $\varphi:\R^{2m}\to \mathbb{C}$
and a regular $2m\times 2m$ matrix 
$D$
$$
\varphi_{D}(x)=(\det D)^{-1}  \varphi(D^{-1}x).
$$
We write $J^\sigma_h$ for the $h$-th row of the matrix $J^\sigma$.
\begin{proposition}\label{p:induction}
Let  $\Delta>1$ and $\alpha< \frac 1m$.
There exists  $C>0 $
such that the following holds.
Let $0\le l\le m$ be an integer.
Let
 $J$, $A_j$
be as in Theorem \ref{mains}
and write $\Pi:=\Pi^J$.
Assume
that for  each $1\le h\le l$ we have  $J^0_h=J^1_h$.
Let $\mathcal{T}$ 
be any convex tree.
Let  $D$ be a $2m\times 2m$ diagonal matrix whose diagonal entries $d_h^\sigma$ are positive,
satisfy $d_{h}^0=d_h^1=:d_h$, and, in case $l<m$, for all
$1\le h\le l< i\le m$ 
\begin{equation}\label{dassumption}
d_h\le d_{l+1}=d_i.
\end{equation}
Then, for every tuple $(F_j)_{j\in \C}$  satisfying $F_j=F_{i\ast j}$ for all $j\in \C$ and $1\leq i \leq l$,
\begin{equation} \label{e:indbound}
|\Lambda(\T,J,K)((F_j)_{j\in \C})|\le C
\Big(\prod_{i=1}^m \max\{d_i^{2\alpha},d_i^{-2}\}\Big)  |Q_{\T}| \prod_{j\in \C} M_\alpha(F_j)(A_j\T)
\end{equation}
holds for the following three types of kernels $K$.
If $i>l$, then  \eqref{e:indbound} holds for the kernel
\begin{equation}\label{e:firstkernel}
  K(t, x)= \frac{c(t)}{1+|u|^{4m}} \int_{\R^m}    (\partial^0_i \partial^1_{i} {g})_{tD}(x+Jp+tDu)\,dp
\end{equation}
with $c,u,i$   as in \eqref{form:local}. 
If $l>0$, then \eqref{e:indbound} holds for the kernel 
\begin{equation}\label{e:secondkernel}
  K(t,x) =\int_{\R^m}  |tD\Pi^T\tilde{\Theta}\xi|^2  g(tD\Pi^T\xi){e^{2\pi i \xi^T  \Pi x}}  \, d\xi, 
\end{equation}
and for the kernel
\begin{equation}\label{e:thirdkernel}
  K(t,x)=  \sum_{i=1}^l \int_{\R^m}    (\partial_i^0 \partial_{i}^1 g)_{tD}(x+Jp)\,dp.
\end{equation}
Here 
$\tilde{\Theta}$ 
is the 
diagonal $m\times m$ matrix with 
$i$-th diagonal entry $\tilde{\Theta}_{i}=0 $ if $i\le l$ and
$\tilde{\Theta}_{i}=1 $ if $i> l$ .
\end{proposition}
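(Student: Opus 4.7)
The plan is a downward induction on $l$, running from the base case $l=m$ down to $l=0$; the latter, applied to kernel \eqref{e:firstkernel}, gives Theorem \ref{mainx}. The reason three kernel families must be bundled into a single proposition is that each Cauchy--Schwarz step inside the induction converts one kernel type at level $l$ into the other two types at level $l+1$, so all three must be carried simultaneously for the induction to close.

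For the base case $l=m$, the hypothesis $J^0_h=J^1_h$ for every $h$ forces $J^0=J^1$ and hence $\Pi^J=(I,-I)$, while $F_j=F_{i\ast j}$ for all $i\le m$ forces all $F_j$ to coincide with a single function $F$. Then \eqref{e:secondkernel} is identically zero since $\widetilde\Theta=0$, leaving nothing to prove. For \eqref{e:thirdkernel} I would integrate out $p$, reducing the kernel to a $(tD)$-rescaled bump on $\R^{2m}$ effectively supported near $\ker\Pi^J$; pairing against the fully symmetric product and using Brascamp--Lieb (Lemma \ref{l:brascamplieb}) together with the Gaussian estimate (Lemma \ref{l.gauss}), cube-by-cube in the tree in the spirit of Lemma \ref{L:maximal_function}, yields the bound while the factor $\prod_i\max(d_i^{2\alpha},d_i^{-2})$ absorbs the $D$-anisotropy.

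For the inductive step from level $l+1$ to level $l$, I would treat each kernel type separately. For kernel \eqref{e:firstkernel} with $i>l$: apply Cauchy--Schwarz in the pair $(x^0_{l+1},x^1_{l+1})$; the product over $\C$ splits along the $(l+1)$-face, and since $i\neq l+1$ the cancellation $\partial_i^0\partial_i^1$ commutes through. The squared form then enjoys symmetry through index $l+1$, and after re-expressing the squared Gaussian through the cone decomposition \eqref{decrsg} and the algebraic identity \eqref{e:lastdisplay} one lands in kernels \eqref{e:secondkernel} and \eqref{e:thirdkernel} at parameter $l+1$, to which the inductive hypothesis applies. For kernel \eqref{e:thirdkernel} at level $l$: passing to the Fourier side, a spectral identity recasts the sum over $i\le l$ of products $\partial_i^0\partial_i^1g$ as precisely the multiplier $|tD\Pi^T\widetilde\Theta\xi|^2\,g(tD\Pi^T\xi)$ appearing in \eqref{e:secondkernel}, with $l$ unchanged. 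For kernel \eqref{e:secondkernel} at level $l$: the sum-of-squares identity $|tD\Pi^T\widetilde\Theta\xi|^2=\sum_{i>l}|(tD\Pi^T\xi)_i|^2$ permits a partial integration in $x$, transferring one factor of $tD\Pi^T\xi$ onto the $F_j$'s via $\partial_i^0+\partial_i^1$ and returning to kernel \eqref{e:firstkernel} at parameter $l+1$. Positivity of the Cauchy--Schwarz squared terms, as foreshadowed by \eqref{e:positivity}, permits summing the produced pieces without sign cancellation.

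The main obstacle is the anisotropic bookkeeping. Each Cauchy--Schwarz step enlarges the $(l+1)$-scale $d_{l+1}$ and forces a cone decomposition \eqref{decrsg} whose number of pieces grows polynomially in the eccentricity; the modified maximal operator $M_\alpha$, weighted by $(\varepsilon_R^\alpha)^{2^{1-m}}$ as in \eqref{E:tildestrong}, must absorb this geometric loss, and the threshold $\alpha<\frac 1 m$ is precisely what makes the series over cone pieces summable. Tracking the $D$-powers through all three kernel transformations so that the final product $\prod_i\max(d_i^{2\alpha},d_i^{-2})$ in \eqref{e:indbound} is correct, while simultaneously preserving the constraint \eqref{dassumption} and keeping the Calder\'on--Zygmund cancellation visibly relocated onto the faces of the cube $\C$ on which it is needed, is the technical heart of the induction and the reason the proof splits naturally into three subsections, one per kernel type.
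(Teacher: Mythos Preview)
Your overall inductive architecture is right --- downward induction on $l$, with Cauchy--Schwarz providing the step from $l+1$ to $l$ --- and you correctly identify the role of positivity and of the threshold $\alpha<\frac1m$ in summing the cone pieces. But two of your three kernel transitions are wrong in ways that would make the proof collapse.

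\textbf{The telescoping is missing.} Your claim that ``a spectral identity recasts the sum over $i\le l$ of products $\partial_i^0\partial_i^1g$ as precisely the multiplier $|tD\Pi^T\widetilde\Theta\xi|^2\,g(tD\Pi^T\xi)$'' is false. Passing the third kernel to the Fourier side gives the multiplier $\langle(1-\widetilde\Theta)\Pi_0 tD\Pi^T\xi,(1-\widetilde\Theta)\Pi_1 tD\Pi^T\xi\rangle$, and the algebraic identity \eqref{e:lastdisplay} shows that \emph{twice} this equals $|tD\Pi^T\widetilde\Theta\xi|^2-|tD\Pi^T\xi|^2$. The difference $|tD\Pi^T\xi|^2$ produces a genuinely new kernel $\tilde K$ that is \emph{not} covered by the inductive hypothesis; it is the full Laplacian of the Gaussian, which via the heat equation becomes $2\pi t\,\partial_t(g_{tD})$. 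Integrating $dt/t$ telescopes the sum in $k$ to boundary terms on $\partial T_k$, and bounding those is the technical core of the step (it is where the additive combinatorics on $\partial T_k\cap(2^k\Z)^m$ and the careful box-covering arguments \eqref{e:set1}--\eqref{e:set7} enter). Without this telescoping and the accompanying boundary estimate, you simply cannot pass from the third kernel to the second; your base case at $l=m$ is also incomplete for the same reason, since there the third kernel \emph{equals} $-\tilde K/2$.

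\textbf{Partial integration on the $F_j$ would fail.} For the second kernel you propose to integrate by parts in $x$, ``transferring one factor of $tD\Pi^T\xi$ onto the $F_j$'s via $\partial_i^0+\partial_i^1$.'' This cannot work: the only control available on $F_j$ is through the maximal function $M_\alpha(F_j)$, which sees no derivatives; differentiating $F_j$ destroys the bound. The paper instead keeps all derivatives on the Gaussian by writing, on each frequency cone $B_\gamma$, $|\eta|^2=\eta_i^0\eta_i^1\,q_\gamma(\eta)$ for some $i>l$ (this is \eqref{decomph}), producing exactly the first-kernel structure $\partial_i^0\partial_i^1 g$ with a modified diagonal matrix $E$. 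This reduction stays at the \emph{same} level $l$, not at $l+1$ as you write; the only level-changing step in the whole scheme is the Cauchy--Schwarz one. Incidentally, in that step the paper reduces to $i=l+1$ and performs Cauchy--Schwarz precisely in the $(x_i^0,x_i^1)$ direction, so that each $g'$ factor in $\partial_i^0\partial_i^1 g$ lands on its own side of the split --- not ``since $i\neq l+1$'' as you wrote, which is backwards.
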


The bound \eqref{e:indbound} for 
\eqref{e:firstkernel}
in case $l=0$ and $D$ the identity matrix gives Theorem \ref{mainx}.
Note a sign change in front of $Jp$, which is harmless as we integrate over $\R^m$.
The bound \eqref{e:indbound} for 
\eqref{e:firstkernel}
in case $l=m$ is void
as there is no index $l<i \leq m$,
and hence the statement is vacuously true for $l=m$.

In the next three sections, we will show that 
bounds for \eqref{e:secondkernel}
for some $l$ follow from bounds for \eqref{e:firstkernel} for the same $l$,
that 
bounds for \eqref{e:thirdkernel}
for some $l$ follow from bounds for \eqref{e:secondkernel} for the same $l$,
and that 
bounds for \eqref{e:firstkernel}
for some $l$ follow from bounds \eqref{e:thirdkernel} for  $l+1$.
By induction, this concludes the proof of Theorem \ref{mainx}.
As the induction consists of finitely many steps, it is permissible that
the constants $C$
which may depend on $\Delta$, $m$, $\alpha$ but not on $D, \T,c,u,i$ may change each inductive step between hypothesis and conclusion.

\subsection{Expansion: Bounds for the first kernel  imply bounds for the second kernel}

\label{S:reexpansion}

Let $J$, $\alpha$, $0<l\le m$, $D$ be given, set $\Pi=\Pi^J$,  and 
let $K$ be the kernel 
 \eqref{e:secondkernel}.
If $l=m$, then  $\tilde{\Theta} =0$ and the bounds \eqref{e:indbound}
are trivial. Thus, we assume $l<m$.
Let $d:=d_{l+1}$ denote the maximal entry of $D$.
Let ${\Theta}:\R^{2m}\to \R^{2m}$ be the diagonal matrix consisting of two copies of $\tilde{\Theta}$
on the diagonal. 

\begin{lemma}\label{l:firstlemma}
For each $j\in \C$, the map 
$\Pi^T$ maps $(1-\tilde{\Theta})\R^m$ to
$(1-\Theta) \R^{2m}$ and the map
$\Pi_j \Pi^T$ maps $(1-\tilde{\Theta})\R^m$
onto itself. Finally, we have $\Theta  \Pi^T \tilde{\Theta}=\Theta \Pi^T$.

\end{lemma}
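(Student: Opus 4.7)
\textbf{Proof plan for Lemma \ref{l:firstlemma}.} The whole lemma is driven by a single observation: the hypothesis $J^0_h = J^1_h$ for $1\le h\le l$ says precisely that the matrix $J^0 - J^1$ has its first $l$ rows zero, which in terms of the projection $\tilde\Theta$ is the identity $(I-\tilde\Theta)(J^0 - J^1)=0$. Equivalently, for every $y\in (I-\tilde\Theta)\R^m$, we have $(J^0)^T y = (J^1)^T y$, since transposing converts the row condition into a statement about the column sums used to form $(J^0)^T y$ and $(J^1)^T y$ (the only surviving indices are $h\le l$, where the rows agree).

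First I would prove Claim 1. Writing $\Pi = (I,-J^0(J^1)^{-1})$ gives
\[
\Pi^T = \begin{pmatrix} I \\ -(J^1)^{-T}(J^0)^T \end{pmatrix},
\]
so for $y\in (I-\tilde\Theta)\R^m$ we get $\Pi^T y = (y,\, -(J^1)^{-T}(J^0)^T y)^T$. By the key observation, $(J^0)^T y = (J^1)^T y$, hence $-(J^1)^{-T}(J^0)^T y = -y$. Therefore $\Pi^T y = (y,-y)^T$, and since both $y$ and $-y$ lie in $(I-\tilde\Theta)\R^m$, the vector $\Pi^T y$ lies in $(I-\Theta)\R^{2m}$.

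Next I would prove Claim 2. Using the formula $\Pi^T y = (y,-y)^T$ from Claim 1 and the definition of $\Pi_j$ as the coordinatewise selector \eqref{def:Pij}, one finds
\[
\Pi_j\Pi^T y \;=\; (\varepsilon_1 y_1,\dots,\varepsilon_m y_m)^T, \qquad \varepsilon_i = (-1)^{j(i)}.
\]
This vector has zero entries in all positions $h>l$ since $y$ does, so it lies in $(I-\tilde\Theta)\R^m$; and conversely every $z\in (I-\tilde\Theta)\R^m$ is hit by choosing $y_h = \varepsilon_h z_h$, giving surjectivity.

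Finally I would deduce Claim 3 from Claim 1. Write
\[
\Theta\Pi^T - \Theta\Pi^T\tilde\Theta \;=\; \Theta\Pi^T(I-\tilde\Theta).
\]
For any $z\in\R^m$, the vector $(I-\tilde\Theta)z$ lies in $(I-\tilde\Theta)\R^m$, so by Claim 1, $\Pi^T(I-\tilde\Theta)z$ lies in $(I-\Theta)\R^{2m}$, where $\Theta$ vanishes. Hence the composition is zero, giving $\Theta\Pi^T\tilde\Theta = \Theta\Pi^T$. There is no real obstacle here — the content of the lemma is the translation of the row hypothesis into the diagonal matrix language, after which all three statements are immediate linear-algebraic consequences.
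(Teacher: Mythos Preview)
Your proof is correct and follows essentially the same route as the paper's. The paper phrases the key identity on the row side as $(1-\tilde\Theta)\Pi=(1-\tilde\Theta)(I,-I)$ and then transposes, while you work directly with $\Pi^T y=(y,-y)^T$ for $y\in(I-\tilde\Theta)\R^m$; the only cosmetic difference is that for Claim~2 the paper invokes regularity of $\Pi_j\Pi^T$ to get surjectivity, whereas you exhibit the explicit inverse via the sign matrix.
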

\begin{proof}
By assumption we have
$(1-\tilde{\Theta})J^0=(1-\tilde{\Theta})J^1$
and hence
$$(1-\tilde{\Theta})(-J^0(J^1)^{-1})=\tilde{\Theta}-1.$$
We obtain
$$(1-\tilde{\Theta})\Pi=(1-\tilde{\Theta})(I,-J^0(J^1)^{-1})=(1-\tilde{\Theta})(I,-I).$$
This proves the first statement by transposition.
Moreover,
\begin{equation}\label{e:ej}
(1-\tilde{\Theta})\Pi\Pi_j^T=(1-\tilde{\Theta})E_j
\end{equation}
for the diagonal matrix $E_j$ with $h$-th diagonal entry $(-1)^{j(h)}$. Hence $\Pi_j\Pi^T$ maps 
$(1-\tilde{\Theta})\R^m$
into itself. As $\Pi_j\Pi^T$
is regular, it maps $(1-\tilde{\Theta})\R^m$
onto itself. This proves the second statement. The last statement follows from
$$\Theta\Pi^T (1-\tilde{\Theta})=0,$$ 
which is clear from the first statement.
\end{proof}

\begin{lemma}\label{l:seclemma}
There is a constant $C$ depending on $\Delta$ such that for all $j\in \C$  and all $\xi\in \R^m$ we have
\begin{equation}\label{e:seclemma}
|\Pi^T \tilde{\Theta} \xi| 
\le C|\tilde{\Theta}\Pi_j\Pi^T\tilde{\Theta}\xi|
\le C|{\Theta}\Pi^T\tilde{\Theta}\xi|.
\end{equation}
\end{lemma}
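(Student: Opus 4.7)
The plan is to treat the two inequalities separately, exploiting in both cases the decomposition $\R^m = (1-\tilde{\Theta})\R^m \oplus \tilde{\Theta}\R^m$ into the first $l$ and the last $m-l$ coordinate axes suggested by Lemma~\ref{l:firstlemma}.

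For the right inequality, I will use the explicit form
$$\Pi^T\tilde{\Theta}\xi = \begin{pmatrix} \tilde{\Theta}\xi \\ -((J^1)^{-1})^T(J^0)^T\tilde{\Theta}\xi \end{pmatrix},$$
so that the nonzero entries of $\Theta\Pi^T\tilde{\Theta}\xi$ are precisely the entries of $\tilde{\Theta}\xi$ in the upper block (which are $\xi_i$ for $i>l$) together with the corresponding entries of $-\tilde{\Theta}((J^1)^{-1})^T(J^0)^T\tilde{\Theta}\xi$ in the lower block. Since $\Pi_j$ selects for each coordinate $i$ either the upper or the lower of these two values, each coordinate of $\tilde{\Theta}\Pi_j\Pi^T\tilde{\Theta}\xi$ appears (up to sign) among the coordinates of $\Theta\Pi^T\tilde{\Theta}\xi$. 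This yields the right inequality with $C=1$.

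For the left inequality, the key input is Lemma~\ref{l:firstlemma}, which asserts that $\Pi_j\Pi^T$ maps $(1-\tilde{\Theta})\R^m$ onto itself. In block form with respect to the decomposition above, $\Pi_j\Pi^T$ is therefore upper triangular,
$$\Pi_j\Pi^T = \begin{pmatrix} A_j^{LL} & A_j^{LH} \\ 0 & B_j \end{pmatrix}.$$
Since $\Pi_j\Pi^T$ is $\Delta'$-regular for some $\Delta'$ depending only on $\Delta$ (as established already in Section~\ref{S:maintheorem}), both diagonal blocks are invertible, and the norms of $A_j^{LL}, B_j$ and their inverses are controlled by a constant depending only on $\Delta$. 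Reading off the action in this decomposition, $\tilde{\Theta}\Pi_j\Pi^T\tilde{\Theta}\xi$ is just $B_j$ applied to $\tilde{\Theta}\xi$, hence $|\tilde{\Theta}\xi| \le C|\tilde{\Theta}\Pi_j\Pi^T\tilde{\Theta}\xi|$. Combining this with the elementary bound
$$|\Pi^T\tilde{\Theta}\xi| \le \bigl(1 + |(J^1)^{-1}|\cdot|J^0|\bigr)|\tilde{\Theta}\xi| \le C|\tilde{\Theta}\xi|,$$
valid because $J^0$ and $J^1$ are $\Delta$-regular, completes the proof.

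The only conceptual point is the block upper-triangularity of $\Pi_j\Pi^T$ supplied by Lemma~\ref{l:firstlemma}, together with the observation that the $\Delta$-regularity of $\Pi_j\Pi^T$ automatically passes to the lower block $B_j$; everything else is bookkeeping. The bound $|B_j^{-1}|\le C(\Delta)$ follows from the identity $\det(\Pi_j\Pi^T)=\det(A_j^{LL})\det(B_j)$ together with the upper bound $|A_j^{LL}|\le|\Pi_j\Pi^T|\le\Delta$.
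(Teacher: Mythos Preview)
Your proof is correct and follows essentially the same approach as the paper. For the right inequality, the paper compresses your coordinate argument into the identity $\tilde{\Theta}\Pi_j=\Pi_j\Theta$ together with $|\Pi_j v|\le|v|$; for the left inequality, the paper phrases your block-triangular argument as the existence of $\eta\in(1-\tilde{\Theta})\R^m$ with $\tilde{\Theta}\Pi_j\Pi^T\tilde{\Theta}\xi=\Pi_j\Pi^T(\tilde{\Theta}\xi+\eta)$ and then uses orthogonality $|\tilde{\Theta}\xi|\le|\tilde{\Theta}\xi+\eta|$ in place of your determinant factorization to control $|B_j^{-1}|$, but this is the same idea in different clothing.
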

\begin{proof}
The second inequality follows from $\tilde{\Theta}\Pi_j=\Pi_j \Theta$ and the fact that $\Pi_j$ is a projection. To prove the first inequality,
by homogeneity, we may assume  $|\tilde{\Theta}\xi|= 1$.
As $\Pi_j\Pi^T$ maps $(1-\tilde{\Theta})\R^m$ onto itself, there is an $\eta\in (1-\tilde{\Theta})\R^m$ such that 
$$\tilde{\Theta}\Pi_j\Pi^T\tilde{\Theta}\xi =\Pi_j\Pi^T(\tilde{\Theta}\xi+\eta).$$
It follows by $\Delta$-regularity of $\Pi_j\Pi^T$ that with $C$ depending on $\Delta$ 
\begin{equation}
    \label{thetanonzero}
|\tilde{\Theta}\Pi_j\Pi^T\tilde{\Theta}\xi|\ge C|\tilde{\Theta}\xi+\eta| \ge C|\tilde{\Theta}\xi|.
\end{equation}
The first inequality in \eqref{e:seclemma} follows by $\Delta$-regularity of both blocks of $\Pi^T$.
\end{proof}

Define $W=D\Pi^T\tilde{\Theta}\R^m$.
\begin{lemma}\label{l:thirdlemma}
There exists $\delta>0$ depending on  $\Delta$ such that for each $\eta\in W$ with $1\le |\eta|$  there is an $l<i\le m$ such that for all $\zeta\in W$ with $|\zeta-\eta|<\delta$ we have
$\delta\le |\zeta_i^0|$ and $\delta\le |\zeta_i^1|$.
\end{lemma}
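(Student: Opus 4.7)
The plan is to reduce the qualitative conclusion to a pointwise statement at $\zeta = \eta$ and then combine compactness of the unit sphere of the finite-dimensional subspace $W$ with the rigidity of Lemma \ref{l:seclemma}. Each coordinate map $\zeta \mapsto \zeta_i^\sigma$ is $1$-Lipschitz and the problem is homogeneous in $|\eta|$, so it suffices to produce a single constant $\delta_0 > 0$ depending only on $\Delta$ with the property that for every unit vector $\eta \in W$ there is some $l < i \le m$ with $|\eta_i^0| \ge \delta_0$ and $|\eta_i^1| \ge \delta_0$. The statement of the lemma then follows for any $\delta < \delta_0/3$ by the triangle inequality.

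Suppose no such $\delta_0$ exists. By compactness of the unit sphere of the finite-dimensional subspace $W$, I can extract a unit vector $\eta \in W$ with the property that for every $l < i \le m$, at least one of $\eta_i^0$ and $\eta_i^1$ vanishes. Write $\eta = D\Pi^T \xi$ with $\xi \in \tilde\Theta \R^m$ and set $N := (J^0(J^1)^{-1})^T$, so that $\Pi^T$ has upper block $I$ and lower block $-N$. Then $\eta_i^0 = d_i \xi_i$ and $\eta_i^1 = -d_i [N\xi]_i$. By \eqref{dassumption} one has $d_i = d_{l+1} > 0$ for $i > l$, so $\eta_i^0 = 0$ iff $\xi_i = 0$ on this range. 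Introducing
\[
S := \{\, l < i \le m : \xi_i \ne 0 \,\},
\]
the vanishing hypothesis forces $[N\xi]_i = 0$ for every $i \in S$, while $\xi_i = 0$ for every $l<i\le m$ with $i\notin S$.

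The hard part is extracting $\xi = 0$ from this coordinate vanishing pattern, and this is where Lemma \ref{l:seclemma} does the work. I would choose $j \in \C$ by setting $j(i) = 1$ for $i \in S$, $j(i) = 0$ for $l < i \le m$ with $i \notin S$, and $j(i)$ arbitrary for $i \le l$. By construction, the $i$-th component of $\tilde\Theta \Pi_j \Pi^T \xi$ equals $\xi_i$ when $j(i) = 0$ and equals $-[N\xi]_i$ when $j(i) = 1$, and vanishes for $i \le l$; with this choice of $j$, every coordinate is zero. Hence $\tilde\Theta \Pi_j \Pi^T \xi = 0$, and the first inequality of Lemma \ref{l:seclemma} gives $\Pi^T \xi = 0$, whence $\xi = 0$ and $\eta = 0$, contradicting $|\eta| = 1$. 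The only real subtlety is aligning the choice of $j \in \C$ with the vanishing pattern of $\eta$ along coordinates $i > l$; introducing $S$ makes this matching automatic.
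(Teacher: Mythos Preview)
Your reduction to unit vectors and your endgame---choosing $j\in\C$ according to the vanishing pattern of $\eta$ and applying Lemma~\ref{l:seclemma} to force $\eta=0$---are both correct, and this is exactly the mechanism the paper uses. The gap is in the compactness step. You claim to produce $\delta_0>0$ \emph{depending only on $\Delta$} by contradiction and compactness of the unit sphere of $W$. But $W=D\Pi^T\tilde\Theta\R^m$ depends on $D$ (and on $J$), and the diagonal entries $d_h$ for $h\le l$ are unconstrained in $(0,d]$; the family of admissible $D$ is not compact. Your compactness argument, as written, only yields some $\delta_0(W)>0$ for each fixed $W$, with no control on how $\delta_0$ varies with $D$. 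The lemma demands, and its application (the partition of unity with $\#\Gamma$ and the bounds on $q_\gamma$ independent of $D$) genuinely needs, a $\delta$ uniform in $D$ and $J$.

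The fix is to make your last paragraph quantitative instead of qualitative, and this is precisely what the paper does. Given a unit $\eta\in W$, choose $j$ with $|\eta_i^{j(i)}|=\min(|\eta_i^0|,|\eta_i^1|)$ for $i>l$; then $|\tilde\Theta\Pi_j\eta|\le m\max_{i>l}\min(|\eta_i^0|,|\eta_i^1|)$. Since $\Theta D=d\Theta$ and $|\eta|\le d|\Pi^T\tilde\Theta\xi|$, Lemma~\ref{l:seclemma} gives
\[
1=|\eta|\le d\,|\Pi^T\tilde\Theta\xi|\le Cd\,|\tilde\Theta\Pi_j\Pi^T\tilde\Theta\xi|=C|\tilde\Theta\Pi_j\eta|\le Cm\max_{i>l}\min(|\eta_i^0|,|\eta_i^1|),
\]
so $\delta_0=(Cm)^{-1}$ works, explicitly in terms of $\Delta$. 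Your contradiction argument is the $\delta_0\to0$ limit of this inequality; to prove the lemma you need the inequality itself.
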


\begin{proof}
With $C$ from Lemma \ref{l:seclemma}, we set $\delta=(2Cm)^{-1}$.
We then have   for any $\xi$ and  $j$
\begin{equation}\label{e:longthetas}
|D \Pi^T \tilde{\Theta} \xi| \le d| \Pi^T \tilde{\Theta} \xi| 
\le Cd|\tilde{\Theta}\Pi_j\Pi^T\tilde{\Theta}\xi|
=C|\tilde{\Theta}\Pi_jD\Pi^T\tilde{\Theta}\xi|,
\end{equation}
where we used 
$\tilde{\Theta}\Pi_j=\Pi_j \Theta$
and ${\Theta}D=d{\Theta}$ from \eqref{dassumption}.
Now fix $\eta\in W$ with $1\le |\eta|$. Then there is $\xi\in \R^m$ with  $\eta=D\Pi^T \tilde{\Theta} \xi$.
Pick $j$ so that for
$l<i\le m$ we have
$$|\eta_i^{j(i)}|=\min(
|\eta_i^0|,|\eta_i^1|).$$
Then, with \eqref{e:longthetas},
$$1\le |\eta|\le C |\tilde{\Theta}\Pi_j\eta|
\le C m\max_{l<i\le m} 
\min(
|\eta_i^0|,|\eta_i^1|).$$
Pick $l<i\le m$ so that the maximum is attained in the last display.
For each $\zeta\in W$ with
$|\zeta-\eta|<\delta$ we have  
$$(Cm)^{-1}\le \min(
|\eta_i^0|,|\eta_i^1|)\le \delta+ \min(
|\zeta_i^0|,|\zeta_i^1|).$$
Therefore, with the definition of
$\delta$,
$$\delta\le  \min(
|\zeta_i^0|,|\zeta_i^1|).$$
\end{proof}

We need a partition of unity of $W\setminus \{0\}$, which we find as follows.
Consider the shell
$$S:=\{\eta\in W : 1\le |\eta|\le 4\}.$$
Let $\Gamma$ be a 
maximal set of $\delta/4$-separated vectors in $S$. The cardinality of $\Gamma$ depends on $\Delta$.
For each $\gamma\in \Gamma$,
let $B_\gamma$ be the open ball in $W$ of radius  $\delta$ about
$\gamma$, and let $\rho_\gamma$ be
a smooth function supported on the closure of $B_\gamma$ and strictly positive on
$B_\gamma$.
The collection of balls $B_\gamma$  covers $S$, while the collection of all $2^sS$ with $s\in \Z$ covers $W\setminus \{0\}$.
Hence
$$\psi_\gamma(\eta):=
\rho_\gamma(\eta) (\sum_{s\in \Z}\sum_{\gamma\in \Gamma} \rho_\gamma(2^{s}\eta))^{-1}$$
is well defined, supported on the closure of $B_\gamma$, and smooth for each $\gamma\in \Gamma$. We have
$$\sum_{s\in \Z}\sum_{\gamma\in \Gamma} \psi_\gamma(2^{s}\eta )=1$$
for all $\eta \in W\setminus \{0\}$.
For each $\gamma \in \Gamma$, by construction of the ball $B_\gamma$ and Lemma \ref{l:thirdlemma},  there is an $i=i_\gamma$ with
$l<i\leq m$
and a smooth compactly supported function $q_\gamma$ with support and derivative bounds independent of $D$, so that for
$\eta$ in $B_\gamma$ we have
\begin{equation}\label{decomph}
|\eta|^2=
\eta^0_i\eta^1_{i} 
q_\gamma (\eta).
\end{equation}
We fix such a choice of $i_\gamma$ and $q_\gamma$.

We further need an annular  partition of unity of $\R^{2m}\setminus \{0\}$. 
Let $\rho$ be a non-negative smooth function on $\R^{2m}$ supported on the closed annulus 
$1\le |\tau|\le 4$ and positive on the open annulus and define
for $r>0$ 
$$ \phi_r(\tau)=\rho(2^{-r}\tau)(\sum_{k\in \Z} \rho(2^{-k}\tau))^{-1}$$
and  
$$\phi_0(\tau)=(\sum_{r\le 0}\rho(2^{-r}\tau))(\sum_{k\in \Z} \rho(2^{-k}\tau))^{-1}.$$
Then we have for all $\tau\in \R^{2m}\setminus \{0\}$,
$$\sum_{r\ge 0}\phi_r(\tau)=1.$$

We now introduce a linear map $L$ 
that will be needed in the subsequent computations.
With the  definition of $W$ and Lemma \ref{l:firstlemma} we have
$$\Theta W=\Theta D\Pi^T\tilde{\Theta} \R^m=\Theta D\Pi^T\R^m.$$
Recall from \eqref{thetanonzero}
that $\tilde{\Theta}\Pi_j \Pi^T$ is injective on $\Tilde{\Theta}\R^m$
for each $j$.
Hence $\Theta \Pi^T$ is injective
on $\Tilde{\Theta}\R^m$,
and so is $\Theta D\Pi^T$.
Therefore, the dimension of 
$$\Theta W=\Theta D\Pi^T\tilde{\Theta} \R^m$$ 
is $m-l$, the same as 
$W$.
Hence there is a  regular linear map $L: \Theta W\to W$ such that for all $\xi\in \R^m$
\begin{equation*}
D\Pi^T \tilde{\Theta} \xi=L\Theta D \Pi^T \xi.
\end{equation*}
We note that $\Theta L$ is the identity on $\Theta W$, because  for all  $\xi\in \R^m$,
with Lemma \ref{l:firstlemma},
\begin{equation}
    \label{e:olo}
    \Theta L \Theta D\Pi^T \xi= \Theta D\Pi^T \tilde{\Theta}\xi=\Theta D\Pi^T \xi .
\end{equation}
Observe for $\xi\in \R^m$
$$|L\Theta D\Pi^T\xi|= 
|D\Pi^T\tilde{\Theta}\xi|
\le d|\Pi^T\tilde{\Theta}\xi|\le Cd|\Theta \Pi^T\tilde{\Theta}\xi|=C|\Theta D\Pi^T\xi|,$$
where we used  Lemma \ref{l:firstlemma}, Lemma \ref{l:seclemma}, and  \eqref{dassumption}. 
Hence $\|L\|\le C$ with a  constant depending on $\Delta$ but not on $D$.

Using $L$, we write for the kernel in 
\eqref{e:secondkernel}
\begin{equation*}
  K(t,x) =\int_{\R^m}  |tL\Theta D \Pi^T\xi|^2  g(tD\Pi^T\xi){e^{2\pi i \xi^T  \Pi x}}  \, d\xi 
\end{equation*}
\begin{equation*}
   =C\int_V  |t L\Theta D \eta|^2 g(tD\eta){e^{2\pi i \eta^T  x}}  \, d\eta, 
\end{equation*}
with normalized Lebesgue measure on the space $V=\Pi^T\R^m$ and a constant $C$ that is controlled by $\Delta$.

We use the above partitions of unity to decompose 
\begin{equation}\label{decrsg}
\Lambda(\T, \Pi,K)=\sum_{r
\ge 0}\sum_{s\in \Z}\sum_{\gamma \in \Gamma}
\Lambda(\T, \Pi, K_{r,s,\gamma})
\end{equation}
with 
\begin{equation}
\label{e:krsg}
K_{r,s,\gamma}(t,x):=
\int_{V}  |tL\Theta D\eta |^2   g(tD\eta) \phi_r(tD\eta) \psi_\gamma (2^{s-r}tL\Theta D\eta)
e^{2\pi i \eta^T  x}\, d\eta.
\end{equation}
As $L$ is bounded, there is an $s_0\in \Z$ depending on $\Delta$ so that for 
$s<s_0$ we have
$$|2^{s-r}tL\Theta D\eta|
\le  4^{-1}|2^{-r}tD\eta|.$$
If $\phi_r(tD\eta)$ is non-zero, 
then the right-hand side of the last display is less than $1$. But then
$\psi_\gamma (2^{s-r}tL\Theta D\zeta)$ is zero. Hence we can restrict the sum in \eqref{decrsg}
to $s\ge s_0$.

We estimate each  $\Lambda(\T, \Pi, K_{r,s,\gamma})$
 separately and then sum the estimates by the triangle inequality.
Let $r,s,\gamma$
be arbitrary as in the index set of this sum and $s\ge s_0$.

We apply \eqref{decomph}  with $\xi=2^{s-r}tL\Theta D\eta$. Using 
$\Theta L\Theta=\Theta$ on $DV$ due to \eqref{e:olo}  and homogeneity of the
quadratic and linear factors, we obtain for
\eqref{e:krsg}  
\begin{equation}
\label{e:kgamma1}
\int_{V}  (t{D} \eta)^0_i(t{D} \eta)^1_{i} q_\gamma (2^{s-r}tL\Theta D \eta )  g(tD\eta) \phi_r(tD\eta) \psi_\gamma (2^{s-r}tL\Theta D\eta)
e^{2\pi i \eta^T  x}\, d\eta.
\end{equation}

Define $E$
to be the $2m\times 2m$ diagonal matrix 
$$E= 2^{s-r}\Theta D+ 2^{-r} (1-\Theta)D.$$
Define $Y:=E\Pi^T \R^m$. 
Define 
$c_{r,s,\gamma}(u)$ for $u\in Y$  
to be
\begin{equation}\label{e:defcrs}
(1+|u|^{8m})
\int_{Y}
g(\zeta )^{-1}g(DE^{-1}\zeta) \phi_r(DE^{-1}\zeta ) {\psi}_\gamma (L\Theta \zeta){q}_\gamma(L\Theta\zeta)e^{-2\pi i \zeta^T   u}\, d\zeta,
\end{equation}
where the integral is with respect to normalized Lebesgue measure on $Y$. 

By the Fourier inversion formula, we have for $\zeta \in Y$
$$g(\zeta )^{-1}g(DE^{-1}\zeta) \phi_r(DE^{-1}\zeta) \psi_\gamma (L\Theta \zeta)q_\gamma (L\Theta \zeta )=
\int_{Y}\frac {c_{r,s,\gamma}(u)}{1+|u|^{8m}} e^{2\pi i u^T \zeta }\, du$$
and note that $c_{r,s,\gamma}(u)$ is Schwartz in $u$.
Applying this with $\zeta=tE\eta$ gives for \eqref{e:kgamma1}
$$\int_{Y}\frac{2^{2r-2s}c_{r,s,\gamma}(u)}{1+|u|^{8m}}
\int_{V} e^{2\pi i \eta^T   x} 
(tE \eta)_i^0(tE \eta)_i^1
 g(tE \eta) 
 e^{2\pi i u^T  tE\eta}
 \, d\eta\, du .$$
We identify the inner integral as an integral over the range of $\Pi^T$ of the  Fourier transform of $F$ given by
 $$F(p)=(\partial_i^0 \partial_i^1 g)_{tE}(x+p+tEu),$$
 as then
$$\widehat{F}(\eta)=4\pi^2  
(tE \eta)_i^0(tE \eta)_{i}^1 g(tE \eta) e^{2\pi i \eta ^T  tE u } e^{2\pi i \eta^T  x}.$$

We may replace the inner integral by an integral of $F$ itself over the orthogonal complement of the range of $\Pi^T$. Thus we obtain with a constant $C$ for \eqref{e:kgamma1}
\begin{equation}
   \label{e:ftside} \int_{Y} \frac{2^{2r-2s}c_{r,s,\gamma}(u)}{1+|u|^{8m}} \int_{\R^m}    (\partial_i^0 \partial_{i}^1 g)_{tE}(x+Jp+tEu)\,dp \, du .
\end{equation}

 Using the assumed bound
\eqref{e:indbound}
for 
\eqref{e:firstkernel} and integrability of 
$(1+|u|^{4m})^{-1}$ over
$Y$, we obtain 
the bound
\begin{equation}\label{e:refquestion}|\Lambda(\T,\Pi,K_{r,s,\gamma})((F_j)_{j\in \C})|
\end{equation}
$$\le C
2^{2r-2s}\|c_{r,s,\gamma}\|_\infty \Big(\prod_{i=1}^m \max\{e_i^{2\alpha},e_i^{-2}\}\Big)  |Q_{\T}| \prod_{j\in \C} M_\alpha(F_j)(A_j\T)$$
$$\le C
2^{(2+2m)r +(2m\alpha-2)s}\|c_{r,s,\gamma}\|_\infty \Big(\prod_{i=1}^m \max\{{d}_i^{2\alpha},{d}_i^{-2}\}\Big)  |Q_{\T}| \prod_{j\in \C} M_\alpha(F_j)(A_j\T).$$
Here we used that $r\ge 0$ and $s\ge s_0$ and waived the potential gain from any powers of $2$ by negative multiples of $r$ or $s$.
 
To prove the desired bound
\eqref{e:indbound}
for 
\eqref{e:secondkernel},
it remains to show
\begin{equation}\label{e:csum1}
\sum_{r\ge 0} \sum_{s\ge s_0} \sum_{\gamma \in \Gamma}2^{(2+2m)r+(2m\alpha-2)s}\|c_{r,s,\gamma}\|_\infty\le C.
\end{equation}

 To see 
\eqref{e:csum1}, it suffices to
show for each $r,s ,\gamma$
\begin{equation*}
    \|c_{r,s,\gamma}\|_\infty\le C 2^{-{2^r}}.
\end{equation*}
Decompose $\zeta$ in 
$Y$ into $\Theta\zeta$ and $(1-\Theta)\zeta$
and note
$$DE^{-1}\zeta= 2^{r-s}\Theta \zeta+ 2^r (1-\Theta)\zeta.$$
For $\psi_\gamma(L \Theta\zeta)$ to be non-zero,
we need $\Theta\zeta$
to be bounded by some constant depending on $\Delta$.
For $\phi_r(DE^{-1}\zeta)$ to be non-zero,
we need 
$(1-\Theta)\zeta$ to be bounded by $C$.
Hence the integrand 
in \eqref{e:defcrs}
is supported in a ball of radius $C$.
It will then suffice to show that the partial derivatives
of the function 
$$\zeta \mapsto g(\zeta )^{-1}g(DE^{-1}\zeta) \phi_r(DE^{-1}\zeta ) {\psi}_\gamma (L\Theta \zeta){q}_\gamma(L\Theta\zeta)$$
satisfy a bound
$C2^{-2^r}$
with constant depending on $\Delta$ and the order of derivative.
The smallness ultimately comes from the smallness of the Gaussian $g(DE^{-1}\zeta)$
on the support
of the function $\phi_r$.
More precisely, as all factors are smooth, it suffices to
assume $r>2$.
The first factor and last two factors do not depend on $r$
and are compactly supported and smooth, it therefore suffices to show
the analogous function value and derivative bound on
$$\zeta \mapsto g(DE^{-1}\zeta) \phi_r(DE^{-1}\zeta ).$$
However, 
$$\phi_r(DE^{-1}\zeta )$$ is nonzero only on
an annulus of radius $2^r$, where the Gaussian is very small.
The desired bound then follows from elementary calculus.
This completes the reduction of
bounds for \eqref{e:secondkernel} to
bounds for \eqref{e:firstkernel}.

We conclude this section with remarks on the cases  $J=(I,I)^T$, i.e.,  $\Pi=(I,-I)$, or $m=2$,
in which the above proof can be simplified.
If $\Pi=(I,-I)$ then we have
$$
|D\Pi^T\tilde{\Theta}\xi|^2 = 2\sum_{l<i\le m} d_i^2 \xi_i^2,
$$
while 
$$
\sum_{l<i\le m} (D\Pi^T\xi)_i^0 (D \Pi^T \xi)_{i}^1=-\sum_{i=l+1}^m d_i^2 \xi_i^2.
$$
Using the expression for the  kernel~\eqref{e:firstkernel} on the frequency side as between \eqref{e:kgamma1} and 
   \eqref{e:ftside}, 
the kernel \eqref{e:secondkernel} is trivially a linear combination of kernels
\eqref{e:firstkernel} with $u=0$ and $c(t)=1$.
Similarly, if $l=m-1$, which is the only case that needs to be discussed if $m=2$, then 
both
$
|D\Pi^T\tilde{\Theta}\xi|^2
$ 
and
$
(D\Pi^T\xi)_m^0 (D \Pi^T \xi)_{m}^1
$
are non-zero multiples of 
$d_m^2 \xi_m^2$, 
which simplifies the proof.

However, in general,
$$
|D\Pi^T\tilde{\Theta}\xi|^2
$$ 
 is  not a  linear combination  of terms of the form
 $
(D\Pi^T\xi)_i^0 (D \Pi^T \xi)_{i}^1
$, where $l<i\le m$. To see this, take $m=3$, $l=1$, $D=I$ and
$\Pi=(I,A)$ with 
\[A = \left ( \begin{array}{ccc}
-1 & 0 & 0 \\
a & -1 & b \\
c & d & -1
\end{array} \right ).\]
Then 
\begin{equation*}
|D\Pi^T\tilde{\Theta}\xi|^2  =   |(I,A)^T(0,\xi_2,\xi_3) |^2  =\xi_2^2 +\xi_3^2+(a\xi_2+c\xi_3)^2 +(-\xi_2+d\xi_3)^2+(b\xi_2-\xi_3)^2
\end{equation*}
\begin{equation*}
=(2+a^2+b^2)\xi_2^2 +(2+c^2+d^2)\xi_3^2 +2(ac-b-d)\xi_2 \xi_3.
\end{equation*}
Further,
$$
(\Pi^T\xi)^0_2(\Pi^T\xi)^1_2 =  \xi_2 (A^T \xi)_2=-\xi_2^2+d\xi_2 \xi_3
$$
and
$$
(\Pi^T\xi)^0_3(\Pi^T\xi)^1_3  =  \xi_3 (A^T \xi)_3=-\xi_3^2+b\xi_2 \xi_3.
$$
Thus, comparing the coefficients at $\xi_2^2$ and $\xi_3^2$, we see that in order to have
\begin{equation}\label{E:lin_comb}
|D\Pi^T\tilde{\Theta}\xi|^2  = \alpha (\Pi^T\xi)^0_2(\Pi^T\xi)^1_2  +\beta (\Pi^T\xi)^0_3(\Pi^T\xi)^1_3
\end{equation}
one necessarily needs
$\alpha=-(2+a^2+b^2)$ and $\beta=-(2+c^2+d^2)$. With this choice of $\alpha$, $\beta$, equation~\eqref{E:lin_comb} is fulfilled if and only if the corresponding coeffients at $\xi_2 \xi_3$ on both sides of~\eqref{E:lin_comb} coincide, which amounts to
$$
a^2 d+ b^2 d+bc^2+bd^2+2ac=0.
$$
The last equality is   not satisfied in general.

\subsection{Telescoping: Bounds for the second kernel imply bounds for the third kernel}

\label{S:telescoping}
Let  $\Delta$, $J$, $A_j, \alpha, l,
D, F_j$ be as in Proposition \ref{p:induction}.
Assume $0<l$.

 Let $K$ be the kernel  \eqref{e:thirdkernel}, it is 
a translation invariant  integral of the function 
$$y\mapsto  \sum_{1\le i\le l}(\partial^0_i \partial^1_{i} g)_{t{D}}(x+y)$$
over the kernel of $\Pi$. This is equal to a translation invariant integral of the Fourier transform of the function over the range  $\Pi^T$.
Therefore, with a constant $C$ controlled by $\Delta$,
\begin{equation}\label{e:thirdfourier}
 K(t,x)=  C \sum_{1\le i\le l} \int_{\R^m}    (tD\Pi^T\xi)^0_i
(tD\Pi^T\xi)^1_{i} g(tD\Pi^T\xi )  e^{2\pi i x^T  \Pi^T \xi}\,d\xi
\end{equation}
\begin{equation*} 
   =C \int_{\R^m} \langle (1-\tilde{\Theta})\Pi_0(tD\Pi^T\xi), (1-\tilde{\Theta})\Pi_1(tD\Pi^T\xi)\rangle g(tD\Pi^T\xi )  e^{2\pi i x^T \Pi^T \xi}\,d\xi.  
\end{equation*}
Note that for $\eta\in \R^{2m}$ 
we have
\begin{equation}\label{E:firstdisplay}
|\eta^2|+2\left<(1-\tilde{\Theta})\Pi_0\eta, (1-\tilde{\Theta})\Pi_1\eta\right>
\end{equation}
\begin{equation}\label{e:lastdisplay}=|\tilde{\Theta} \Pi_0\eta|^2+|\tilde{\Theta}\Pi_1\eta|^2+ |(1-\tilde{\Theta}) (\Pi_0+\Pi_1)\eta|^2.\end{equation}
The expression
\eqref{e:lastdisplay}
takes the same value for the vectors
$\eta=D\Pi^T \xi$ 
and 
$\eta=D\Pi^T \tilde{\Theta}\xi$
with $\xi\in \R^m$.
This follows individually for the first two terms in
\eqref{e:lastdisplay}
from
Lemma
\ref{l:firstlemma} and for the third term from
\eqref{e:ej}
in the form
$$
(\Pi_0+\Pi_1)\Pi^T(1-\tilde{\Theta})=0.$$
Hence the expression 
\eqref{E:firstdisplay}
also takes the same value for these vectors and we obtain
$$|D\Pi^T \xi^2|+2\left<(1-\tilde{\Theta})\Pi_0D\Pi^T \xi, (1-\tilde{\Theta})\Pi_1D\Pi^T \xi\right>=
|D\Pi^T\tilde{\Theta}\xi|^2,$$
where we have dropped the inner product term on the right hand side because it vanishes because $\Pi_0\Pi^T$ is the identity and $(1-\tilde{\Theta})\Pi_0\Pi^T\tilde{\Theta}=0$. We remark that the vanishing of this last term is due to our choice of normalization of $\Pi=\Pi^J$ with $\Pi^J \circ J=0$. If one worked with other choices, one would use the full
quadratic form 
\eqref{E:firstdisplay} for $\eta=D\Pi^T\tilde{\Theta}\xi$ in the 
definition of the kernel \eqref{e:secondkernel}.

Then the kernel  $2K(t,x)$ is a difference of 
\[
\tilde{\tilde{K}}(t,x):= C \int_{\R^m} |tD\Pi^T\tilde{\Theta}\xi|^2 g(tD\Pi^T\xi )  e^{2\pi i x^T \Pi^T \xi}\,d\xi
\]
and
\[ \tilde{K}(t,x):= C \int_{\R^m} |tD\Pi^T\xi|^2  g(tD\Pi^T\xi )  e^{2\pi i x^T \Pi^T \xi}\,d\xi.
\]
By the induction hypothesis, 
$\Lambda(\mathcal{T},J,\tilde{\tilde{K}})$
satisfies  the analogue of \eqref{e:indbound}. 

Thus, to show the desired bound \eqref{e:indbound}   for $\Lambda(\mathcal{T},J,K)$, 
it suffices to show the analogue of \eqref{e:indbound} for  
$\Lambda(\mathcal{T},J,{\tilde{K}}).$
Analogous Fourier transform arguments as the ones leading to \eqref{e:thirdfourier}, put in reverse, give
\[ \tilde{K}(t,x) = \int_{\R^m} (\Delta g)_{tD}  (x+Jp)\, dp.
\]
As is well known, the gaussians give fundamental solutions to the heat equation, more precisely, we have
\[(\Delta g)_{tD}(x)   =
t^{-2m}(\det D)^{-1}(-4\pi m +4\pi^2 |(tD)^{-1}x|^2)e^{-\pi |(tD)^{-1}x|^2}
=  2\pi t \partial_t(g_{tD}).\]

Let $k_\T$ be defined by $l(Q_\T)=2^{k_\mathcal{T}}$. 
Using the heat equation in the kernel of $\tilde{K}$  we obtain
$$(2\pi)^{-1} \Lambda(\T,J,\tilde{K}) =
 \sum_{k\in \Z}  \int_{\R^m} \int_{\R^{2m}}  \Big(\prod_{j\in \C} (F_j 1_{A_jT_k}(\Pi_jx) \Big) \int_{2^{k-1}}^{2^k}
t\partial_{t}(g_{tD})(x+Jp) \, \frac{dt}{t} 
 \,dx\, dp.$$
 By the fundamental theorem of calculus in $t$, this equals
$$ \sum_{k\in \Z} \int_{\R^m}\int_{\R^{2m}} \Big( \prod_{j\in \C} ({F}_j1_{A_jT_k}) (\Pi_j x)  \Big ) (g_{2^{k}D} - g_{2^{k-1}D})(x+Jp) \, dx \, dp  $$
\begin{equation}\label{e:teles}
= \int_{\R^m}\int_{\R^{2m}} \Big( \prod_{j\in \C} ({F}_j1_{A_jQ_\T}) (\Pi_j x)  \Big )   g_{2^{k_\T}D}(x+Jp)\, dx \, dp   
\end{equation}
\begin{equation}\label{e:teles1}
  + \sum_{k<k_\T} 
\int_{\R^m}\int_{\R^{2m}} \Big( \prod_{j\in \C} ({F}_j1_{A_jT_k}) (\Pi_j x) 
- \prod_{j\in \C} ({F}_j1_{A_jT_{k+1}}) (\Pi_j x)  \Big )
 g_{2^kD}(x+Jp) \, dx \, dp .  
 \end{equation}
We estimate the two terms \eqref{e:teles} and \eqref{e:teles1} separately.

We begin with \eqref{e:teles1}. We estimate 
\begin{equation}\label{e:stepfunction}
g\le C \sum_{n\ge 0} e^{-2^{n}}\chi_{2^n},
\end{equation}
where $\chi$ is the characteristic function of the square $[-1,1]^{2m}$ and $C$ is a constant  that depends only on $m$. We insert \eqref{e:stepfunction} into 
\eqref{e:teles1}. 
We fix $k<k_\T$ and $n\ge 0$ and consider
\begin{equation*}
 \int_{\R^m}\int_{\R^{2m}} \Big( \prod_{j\in \C} ({F}_j1_{A_jT_k}) (\Pi_j x) 
- \prod_{j\in \C} ({F}_j1_{A_jT_{k+1}}) (\Pi_j x)  \Big )
 \chi_{2^{n+k}D}(x+Jp) \, dx \, dp   .  
\end{equation*} 
Using the distributive law and $T_k\subseteq T_{k+1}$, we have 
$$\Big| \prod_{j\in \C} ({F}_j1_{A_jT_k}) (\Pi_j x) 
- \prod_{j\in \C} ({F}_j1_{A_jT_{k+1}}) (\Pi_j x)  \Big |$$
$$\le \sum_{j_0\in \C}
|{F}_{j_0}|1_{A_{j_0}T_{k+1}\setminus A_{j_0}T_{k}} (\Pi_{j_0} x) 
\prod_{j\neq j_0}
|{F}_j|1_{A_jT_{k+1}} (\Pi_j x). $$
Fix $j_0$. For simplicity of notation consider $j_0=0$, other values of $j_0$ will be analogous.

Let $Q$ be a cube of side length $2^k$ contained in $T_{k+1}\setminus T_k$. We consider
\begin{equation}
    \label{e:teles3}
 \int_{\R^m}\int_{\R^{2m}} 
 |{F}_{0}|1_{A_{0}Q} (\Pi_{0} x) 
\Big( \prod_{j\neq 0}
|{F}_j|1_{A_jT_{k+1}} (\Pi_j x) \Big )
  \chi_{2^{n+k}D}(x+Jp)\, dx \, dp   .  
\end{equation} 
Let $E$ be the $2m \times 2m$ diagonal
matrix such that for all $1\leq i \leq m$ and $\sigma=0,1$,
$${e}^\sigma_{i}=\max (d^\sigma_{i},1).$$
Denote by $\tilde{E}$ the upper left $m\times m$ block of $E$.
If the integrand in \eqref{e:teles3} is not zero, then
\begin{equation}\label{e:set1}
 \Pi_0x\in A_0Q,   
\end{equation} 
\begin{equation}\label{e:set2}
 x+Jp\in 
 {2^{n+k}D}[-1,1]^{2m}
\subseteq {2^{n+k}E}[-1,1]^{2m}.
\end{equation}
Let us denote \[S:={2^{n+k}\tilde{E}}[-1,1]^{m}.\]
Applying $\Pi_0$ and $\Pi_1$ to 
\eqref{e:set2}, we obtain
\begin{equation}\label{e:set3}
 \Pi_0x+A_0p\in  S,
\end{equation}
\begin{equation}\label{e:set4}
 \Pi_1x+A_1p\in  S.
\end{equation}
Combining \eqref{e:set1} and \eqref{e:set3}, we obtain
\begin{equation}\label{e:set5}
A_0p\in S -A_0Q.
\end{equation}
Applying $A_1A_0^{-1}$, we obtain
\begin{equation}\label{e:set6}
A_1p\in A_1A_0^{-1}S- A_1Q.
\end{equation}
Combining \eqref{e:set4} with \eqref{e:set6}, we obtain
\begin{equation}\label{e:set7}
\Pi_1x\in 
S-A_1A_0^{-1}S+ A_1Q.
\end{equation}

We cover the set in \eqref{e:set7} by pairwise disjoint boxes
$$ P=P_1\times \dots \times P_m,$$ 
for example with corners in a suitable lattice, such that
$|P_h|= 2^n2^k\tilde{e}_h$.
Let $\mathcal{P}$ denote the set of boxes
in this cover.
We estimate \eqref{e:teles3} using the cover $\mathcal{P}$
and then integrating $p$
by
\begin{equation}
\label{e:teles4}
\sum_{P\in \mathcal{P}} \frac{|S_P|}{2^{2m(k+n)}\det D} \int_{\R^{2m}}  |{F}_{0}|1_{A_{0}Q} (\Pi_{0} x) 
 |{F}_{1}|1_{A_{1}T_{k+1}\cap P} (\Pi_{1} x) 
\Big( \prod_{j\neq 0,1}
|{F}_j|1_{A_jT_{k+1}} (\Pi_j x) \Big ) \,
 dx
\end{equation}
where 
$$S_P:=(A_1^{-1}(S-P))\cap (A_0^{-1}S-Q)$$
contains the essential support of $p$ by \eqref{e:set4}, \eqref{e:set5}, and  $\Pi_1x\in P$.

We cover $A_0Q$ by a rectangular box
$$A_0Q\subseteq I_1^0\times \dots \times I_m^0$$
such that for each $1\le h\le m$ we have 
$|I^0_h|\le C2^k$ with $C$ depending on $\Delta$.
We fix one box $P\in {\mathcal P}$ and write
$I_h^1$ for $P_h$.
Write
$$I_j=I_1^{j(1)}\times \dots \times I_m^{j(m)}.$$
Then we estimate the summand in \eqref{e:teles4} for this box $P$ by
\begin{equation*}
 \frac{|S_P|}{2^{2m(k+n)}\det D}
 \int_{\R^{2m}} 
 \Big( \prod_{j\in \C}
|{F}_j|1_{A_jT_{k+1}\cap I_j} (\Pi_j x) \Big ) \,
 dx.
\end{equation*}
Here we used that $Q\subseteq T_{k+1}$ and that  $\Pi_0x\in I_0$ together with
$\Pi_1x\in I_1$ implies
$\Pi_jx\in I_j$.

Applying the Brascamp-Lieb inequality in Lemma  \ref{l:brascamplieb} estimates the last display by
\begin{equation*}
\leq C 
 \frac{|S_P|}{2^{2m(k+n)}\det D}
  \prod_{j\in \C}
|I_j|^{2^{1-m}}[{F}_j1_{A_jT_{k+1}}]_{I_j}   
\end{equation*}
\begin{equation*}
\label{e:teles7}
\le  C 
\frac{|S_P|\det \tilde{E}}{2^{mn}\det D}(1+|\tilde{E}|)^{2\alpha}
  \prod_{j\in \C}
M_\alpha ({F}_j)(A_j\T)   .
\end{equation*}
Here we estimated  averages by the modified maximal functions using that {if $A_j T_{k+1}$ has a nonempty intersection with $I_j$ then} the box
$CI_j$ for suitable $C$ depending on $\Delta$ covers the set {$A_jQ'$} for some $Q'\in \T$. To estimate the product of $|I_j|^{2^{1-m}}$ we used that each $I_h^\sigma$ appears in the product $2^{m-1}$ times.

The {parallelepipeds} $A_1^{-1}(P-S)$ have  bounded overlap depending on $m$   
as $P$ runs through $\mathcal{P}$, because  the box $P-S$ is contained in {$3P$}, which intersects {$3P'$} only
for near neighbors $P'$ of $P$. 
Hence
$$\sum_{P\in \mathcal{P}} |S_P|\le C|A_0^{-1}S-Q|\le 
C2^{m(n+k)} \det \tilde{E}.$$ 
Here, the last inequality is obtained as follows. 
The set  $S$
 can be covered by
$C2^{mn}  \det \tilde{E}$
dyadic boxes of side length $2^k$.
Then the set $A_0^{-1}S$ can also
be covered by
$C2^{mn}  \det\tilde{E}$
dyadic boxes of side length $2^k$
with a constant depending on $\Delta$. 
Finally, the set $A_0^{-1}S-Q$
can be covered 
by
$C2^{mn}  \det \tilde{E}$
dyadic boxes of side length $2^{k+2}$. This gives the above 
volume estimate.

Summing over the boxes in $\mathcal{P}$ 
estimates \eqref{e:teles3} by
\begin{equation*}
 C \frac{\det \tilde{E} ^2}{\det D }
(1+|\tilde{E}|)^{2\alpha}
2^{km}  \prod_{j\in \C}
M_\alpha ({F}_j)(A_j\T)   .
\end{equation*}
Summing over the various $j_0\in \C$, all estimated analogously to $j_0=0$,
and then summing over the disjoint cubes $Q$ of volume $2^{km}$ in $T_{k+1}\setminus T_k$
and 
summing over $k<k_\T$,
using that the $T_{k+1}\setminus T_k$ are disjoint in $Q_\T$, and summing over $n$ estimates
\eqref{e:teles1} by
\begin{equation*}
 C \frac{\det \tilde{E}^2}{\det D}
(1+|\tilde{E}|)^{2\alpha}
 |Q_\T|  \prod_{j\in \C}
M_\alpha ({F}_j)(A_j\T) 
\end{equation*}
\begin{equation*}
=  C 
\Big(\max_{i=1,\dots,m} (1+d_i)^{2\alpha} \prod_{i=1}^m \max\{d_i^{-2},1\} \Big) |Q_\mathcal{T}|   \prod_{j\in \C}
M_\alpha ({F}_j)(A_j\T)   .
\end{equation*}

It remains to estimate \eqref{e:teles}, which is done similarly as \eqref{e:teles1} but simpler. 
Estimating the gaussian by a superposition of characteristic functions of boxes we consider
\begin{equation*}
 \int_{\R^m}\int_{\R^{2m}} 
\Big( \prod_{j\in \C}
|{F}_j|1_{A_jQ_{\T}} (\Pi_j x) \Big )
  \chi_{2^{n+k}D}(x+Jp) \, dx \, dp ,
\end{equation*} 
which then is estimated analoguously to   \eqref{e:teles3}.

\subsection{Cauchy-Schwarz: Bounds for the third kernel imply bounds  for the first kernel one level down}

Let  $\Delta$, $A_j, \alpha, l,
D, F_j$ be as in Proposition \ref{p:induction}.
Let $c,u,i$ be  as in \eqref{form:decomposed}
and assume $i>l$.
Without loss of generality we assume $i=l+1$,
as we may interchange the indices $i$ and $l+1$
if necessary in the desired estimate and all assumptions remain intact.

Consider the quantity
$$(1+|u|^{4m})\Lambda(\T,J,K)((F_j)_{j\in \C})$$
for the kernel $K$ defined by \eqref{e:firstkernel}. This quantity is equal to
\begin{equation}\label{beforecs}
   \int_{\R^{2m}}\int_0^\infty  \int_{\R^m} c(t) \Big(\prod_{j\in \C} (F_j 1_{A_jT_t})(\Pi_jx) \Big) 
   (\partial_{i}^0 \partial_{i}^1 g)_{tD}(x+Jp+tDu)\,dp \,
\frac{dt}{t} \,dx,
\end{equation}
where we have set $T_t:=T_k$ if $2^{k-1}\le t<2^{k}$.
We change the order of integration, so that the innermost
integration variables are $x_i^0$ and $x_i^1$. These
two integrals separate. Hence we obtain for 
\eqref{beforecs}
$$\int_{\R^{2m-2}}\int_0^\infty  \int_{\R^m} c(t)\prod_{\sigma=0}^1
\Big (
\int_\R\Big(\prod_{ j(i)=\sigma} (F_j 1_{A_jT_t})(\Pi_jx) \Big) 
({g}')_{td_i}((x+J p+tDu)_i^\sigma) dx_i^\sigma \Big )\, d\mu$$
with
 \begin{equation*}
 d\mu=
 \prod_{1\le h\le m, h\neq i} \Big (  \prod_{\sigma =0}^1  g_{td_h}
((x+J p+tDu)_h^\sigma) dx_h^\sigma \Big)\,dp \,
\frac{dt}{t},
\end{equation*}
where $g$ denotes the one-dimensional Gaussian.

We estimate $|c(t)|$ by $1$ and apply Cauchy-Schwarz in the Hilbert space $L^2(d\mu)$  to estimate the modulus square of \eqref{beforecs} by
\begin{equation}\label{aftercs0}
  \prod_{\sigma=0}^1 \int_{\R^{2m-2}}\int_0^\infty  \int_{\R^m}
\Big(
\int_\R\Big(\prod_{ j(i)=\sigma} (F_j 1_{A_jT_t})(\Pi_jx) \Big) 
({g}')_{td_i}((x+J p+tDu)_i^\sigma) dx_i^\sigma \Big )^2 \, d\mu  .
\end{equation}

Fix $\sigma \in \{0,1\}$. 
The matrix $J^\sigma$ is invertible.
Define
$q\in \R^m$ by  $q :=  - (J^\sigma)^{-1}v$, where $v_i=u^\sigma_i$ and $v_h=0$ for $h\neq i$.
Then we have  $Jq=\Theta Jq$, as one can see from
$$(J^0 q)_h=(J^1 q)_h= -v_h=0$$ 
for $1\le h\le i-1$ as the first $i-1$ rows of $J^0$ and $J^1$ coincide.
As a consequence, using that $d_h=d_i$ for $h\ge i$,   we have
$$D J  q={J}  {q}d_i .$$
Moreover, 
$$|q|\le C|u|$$
with a constant $C$ depending on $\Delta$.
For fixed $t$, we 
use an affine change of variables  to substitute $p$ by $$p+t q d_i$$
and we then obtain for the $\sigma$-th factor in
\eqref{aftercs0} 
\begin{equation}\label{aftercs1}
  \int_{\R^{2m-2}}\int_0^\infty  \int_{\R^m}
\Big(
\int_\R\Big(\prod_{ j(i)=\sigma} (F_j 1_{A_jT_t})(\Pi_jx) \Big) 
({g}')_{td_i}((x+J p)_i^\sigma) dx_i^\sigma \Big )^2 \, d\tilde{\mu}  ,
\end{equation}
where 
\begin{equation*}
d\tilde{\mu}:=\prod_{1\le h\le m, h\neq i} 
\prod_{\lambda=0}^1 g_{td_h}
((x+ Jp  +tD(Jq+u))^\lambda_h)\,dp \frac{dt}{t}.
\end{equation*}
The square in    \eqref{aftercs1} is  positive. We
therefore may estimate  \eqref{aftercs1} by replacing the measure $d\tilde{\mu}$ by the larger measure given by
\begin{equation}\label{domgaussweight}
 C(1+|u|)^{2m-2}\prod_{1\le h\le m, h\neq i}
\prod_{\lambda=0}^1 g_{2td_h(1+|u|)}
((x+Jp )_h^\lambda )\,dp \frac{dt}{t}
\end{equation}
with a constant $C$ depending  on $\Delta$. 
Here we estimated for each $h$
$$(td_h)^{-1}
g
((td_h)^{-1}(x+Jp)_h^\lambda +(Jq+u)^\lambda_h)\le 
C(td_h)^{-1}g
((2+2|u|)^{-1} (td_h)^{-1}(x+Jp)^\lambda_h)
$$
using the elementary inequality
$$g(a+b)\le C g((2+2d)^{-1}a)$$
for real $a,b$  and positive $c,d$
with $|b|\le c{d}$ and
a constant $C$ depending only on $c$.
The elementary inequality is proven by considering separately the case
$|a|\le 2|b|$,
in which the right-hand side is larger than $1$ for sufficiently large $C$, and
$|a|> 2|b|$, in which the inequality holds with constant $C=1$ by monotonicity of $g$ on either half axis.

Performing the above for each $\sigma=0,1$ in \eqref{aftercs0} we estimate 
$$(1+|u|)^{4m}\Lambda(\T,J,K)((F_j)_{j\in \C})^2$$
\begin{equation*}
\le C\prod_{\sigma=0}^1  \int_{\R^{2m-2}}\int_0^\infty  \int_{\R^m}
 \Big(
\int_\R\Big(\prod_{ j(i)=\sigma} (F_j 1_{A_jT_t})(\Pi_jx) \Big) 
({g}')_{td_i}((x+J p)_i^\sigma) dx_i^\sigma \Big )^2 
\end{equation*}
\begin{equation}\label{aftercsest}
\times \prod_{1\le h\le m, h\neq i}
\prod_{\lambda=0}^1 g_{2td_h(1+|u|)}
((x+Jp )_h^\lambda )\,dp \, \frac{dt}{t}.
\end{equation}

We aim to interpret the two
factors with $\sigma=0,1$ as forms $\Lambda$ as in
\eqref{deflambda4} for some modification of $J$. 
For $\sigma=0,1$ we define the matrix $_\sigma J$ as follows. For 
each $1\le h\le m$ and each $\lambda=0,1$ we set
$$ _{\sigma }J^{\lambda}_h:=J^\lambda_h$$
if $\lambda =\sigma $ or $h\neq i$ and 
$$ _{\sigma}J^{\lambda}_h:=J^{1-\lambda}_h$$
if $\lambda \neq \sigma$ and $h=i$. 
 For $\sigma=0,1$ we define $_\sigma F_j:=F_j$ 
if $j(i)=\sigma $ and
$_\sigma F_j:=F_{i*j}$ if 
$j(i)\neq \sigma$. 
We also  define $ _\sigma A_j:=\Pi_j (_\sigma J)$ so that if
$j(i)=\sigma$ we have $A_j={_\sigma A_j}$ and if $j(i)\neq \sigma$
then we have $A_{i* j}={_\sigma A_j}$.
In particular, we see that the matrices
${_\sigma A_j}$ are $\Delta$-regular.

Let    $E$  be a $2m\times 2m$ diagonal matrix with two identical $m\times m$ diagonal blocks, where    the first $m$ diagonal elements  $e_1,\ldots, e_m$ of $E$ are defined by   $e_h =2(1+|u|)d_h$ if $h\neq i$ and $e_i =d_i$. 
The matrix $E$ satisfies the analogue of \eqref{dassumption} with $l$ replaced by $i$. 
Then we expand out the square in  \eqref{aftercsest} and regroup the terms to write \eqref{aftercsest} as
\begin{equation}\label{aftercs}
\prod_{\sigma=0}^1   \int_{\R^{2m}}\int_0^\infty  \int_{\R^m}  \Big(\prod_{j\in \C} (_\sigma F_j 1_{_\sigma A_jT_t})(\Pi_jx) \Big) 
   (\partial_{l+1}^0 \partial_{l+1}^1 g)_{tE}(x+ {_\sigma J} p)\,dp \,
\frac{dt}{t} \,dx
\end{equation}
\[=\prod_{\sigma=0}^1\Lambda(\mathcal{\T},{_\sigma}J,{_\sigma}K_{l+1})((_\sigma F_j)_{j\in \C}) ,  \]
 where we used $i=l+1$ and  for $1\le \kappa \le l+1$ we defined
 \[_\sigma K_{\kappa} (t,x) := \int_{\R^m} (\partial_{\kappa}^0 \partial_{\kappa}^1 g)_{tE}(x+ {_\sigma J} p)\,dp .\]
We observe  that for $1\leq \kappa \le l+1$
\begin{equation}\label{e:positivity}
\Lambda(\mathcal{\T},{_\sigma}J,{_\sigma}K_{\kappa})((_\sigma F_j)_{j\in \C}) \geq 0 .
\end{equation}
For $\kappa=l+1$ this follows from evident positivity of \eqref{aftercsest}. Similarly we can use the symmetries of ${_\sigma}F_j$ and $_\sigma J$ to write 
$$\Lambda(\mathcal{\T},{_\sigma}J,{_\sigma}K_{\kappa})((_\sigma F_j)_{j\in \C})$$
for $\kappa<l+1$ in analogous form to \eqref{aftercsest}.
Thus, to estimate   \eqref{aftercs}   it suffices to estimate \[\prod_{\sigma=0}^1 \Lambda(\mathcal{\T},{_\sigma}J,\sum_{\kappa=1}^{l+1} {_\sigma}K_{\kappa})((_\sigma F_j)_{j\in \C}).  \]
We identify two kernels as in 
\eqref{e:thirdkernel}. Using \eqref{e:indbound} for $l+1$
we estimate the last display by
$$ 
\leq C\prod_{\sigma =0}^1  \,
\Big(\prod_{i=1}^m \max\{e_i^{2\alpha},e_i^{-2}\}\Big)  |Q_{\T}| \prod_{j\in \C} M_\alpha(_\sigma F_j)(_\sigma A_j\T)
$$
\begin{equation}\label{lastsillythingnov22}=
C
\Big(\prod_{i=1}^m \max\{e_i^{2\alpha},e_i^{-2}\}\Big)^2 \Big( |Q_{\T}| \prod_{j\in \C} M_\alpha(F_j)(A_j\T)\Big)^2 .
\end{equation}
Here we reshuffled the products using
$$M_\alpha({_\lambda F_j})(_\lambda A_j\T)=
M_\alpha(F_j)(A_j\T)$$ 
if
$j(i)=\lambda$ and 
$$M_\alpha({_\lambda F_j})(_\lambda A_j\T)=M_\alpha(F_{i*j}) (A_{i*j}\T)$$
if $j(i)\neq \lambda$. We use the definition of $e_i$  to  estimate   \eqref{lastsillythingnov22} as
$$\le (1+|u|)^{4m}
C
\Big(\prod_{i=1}^m \max\{{d}_i^{2\alpha},{d}_i^{-2}\}\Big)^2 \Big( |Q_{\T}| \prod_{j\in \C} M_\alpha(F_j)(A_j\T)\Big)^2 .
$$
This proves the desired estimate.

\section{Proof of Theorem \ref{T:sharp}}

\label{S:sharpness}
 Assume to get a contradiction that
there is $q<2^{m-1}$ so that the analog of Theorem \ref{mainp} holds with the lower threshold $2^{m-1}$ in \eqref{E:prange} replaced by $q$.

Set $\Pi=(I,I)$ so that $\Pi_j\Pi^T=I$  for all $j\in \C$.
Let $K$ be the Riesz kernel 
$$
K(x)=\frac{x_1}{|x|^{m+1}}, \quad x\in \R^m \setminus\{0\}.
$$
For $j\in \C$ with $j(1)=0$ let
 \[p_j=2^{m-1}\]
 and for $j\in \C$ with  $j(1)=1$ let \[p_j=\infty.\] 
These values $p_j$ satisfy the assumptions of the analog of Theorem \ref{mainp}. 

Let $N$ be a large positive number.
For $j\in \C$ with  $j(1)=0$,  let  $F_j:\R^m\to [0,1]$
be smooth, supported in the box
$$[0,1]\times [-2N,2N]^{m-1}$$
and taking value $1$ on the smaller box $$[1/4,1/2]\times [-N,N]^{m-1}.$$
Then $$\|F_j\|_{2^{m-1}} \leq   (4N)^{(m-1)2^{1-m}}.$$ 

Let 
 $F_1:\R^m\to [0,1]$ be smooth,
 supported in the box
 $$[1/2,2N]\times [-2N,2N]^{m-1}$$ and taking value $1$ on the smaller box $$[1,N]\times [-N,N]^{m-1}. $$
 Finally, for $j\in \C$ with  $j(1)=1$ but $j\neq 1$, let $F_j:\R^m\to [0,1]$ be smooth and compactly supported, taking value $1$ on the box $[-N,N]^m$.

With $ \|F_j\|_{\infty}=1$ for all $j\in \C$, we obtain by assumption a constant $C$ independent of $N$ so that 
\begin{equation}
    \label{lastsillythingnov25}
\Big|p.v. \int_{\R^{2m}}  K(\Pi x) \prod_{j\in \C} F_j(\Pi_jx )\, dx \Big|\le C\prod_{j\in \mathcal{C}} \|F_j\|_{p_j}
\le C(4N)^{m-1}.
\end{equation}

On the support of the product of the functions $F_j\circ \Pi_j$, one has $x_1^0\ge {0}$  and $x_1^1\ge {1/2}$. Hence $K(\Pi x)$ is positive on this support. 
We restrict the integral to a domain where all functions $F_j$ take value $1$ and 
thus estimate the left-hand side of
\eqref{lastsillythingnov25} from below by  
$$\int_{[1/4,1/2]\times [-N,N]^{m-1}
\times [1,N]\times [-N,N]^{m-1}
}
\frac{x_1^0+x^1_1}{|x^0+x^1|^{m+1}}\,dx.
$$
For $2\le h\le m$, we introduce
variables $y_h^+=x_h^0+x_h^1$
and $y_h^-=x_h^0-x_h^1$ and combine these variables to  a vector $y=(y^+,y^-)$.
Shrinking the domain of integration from line to line as convenient, we estimate the last display from below by
$$
 2^{2-2m}\int_{[1/4,1/2]}\int_{[1,N]}
 \int_{[-N/2,N/2]^{2m-2}}
\frac{x_1^0+x^1_1}{|(x_1^0+x_1^1,y^+)|^{m+1}}\,dy \, dx_1^1 \, dx_1^0
$$
$$
\ge 2^{-2m}N^{m-1}\int_{[1,N]}\int_{ [-N/2,N/2]^{m-1}}
\frac{x^1_1}{|(2x_1^1,y^+)|^{m+1}}\,
 dy^+ \, dx_1^1
$$
$$
\ge 2^{-2m}N^{m-1}\int_{[1,N/4]}\int_{
 [x_1^1,2x_1^1]^{m-1}}
 \frac{x^1_1}{(2mx_1^1)^{m+1}}\,dy^+ \, dx_1^1
$$
$$
\ge (8m)^{-m-1}N^{m-1} \int_{[1,N/4]}
 \frac{1}{x_1^1}\,dx_1^1=  (8m)^{-m-1}N^{m-1}\ln(N/4) .$$
The right-hand side grows faster in $N$ than the right-hand side of 
\eqref{lastsillythingnov25}, which gives  a contradiction for 
$N$ large enough. This completes the proof of Theorem \ref{T:sharp}.

\end{document}